\newcommand{\R}{{\mathbb R}}
\newcommand{\be}[1]{\begin{equation}\label{#1}}
\newcommand{\ee}{\end{equation}}
\renewcommand{\(}{\left(}
\renewcommand{\)}{\right)}
\newcommand{\ird}[1]{\int_{\R^d}{#1}\;dx}
\newcommand{\nrm}[2]{\left\|#1\right\|_{#2}}
\def\cprime{$'$}
\newcommand{\K}[1]{\mathsf K_{#1}}
\renewcommand{\L}{\mathrm L}
\newcommand{\sphere}{{\mathbb S^{d-1}}}
\newtheorem{theorem}{Theorem}
\newtheorem{lemma}[theorem]{Lemma}
\newtheorem{corollary}[theorem]{Corollary}
\newtheorem{proposition}[theorem]{Proposition}
\newtheorem{remark}{\sl Remark}
\title[Improved interpolation inequalities]{Improved interpolation inequalities, relative entropy and fast diffusion equations}
\author[J. Dolbeault and G. Toscani]{Jean Dolbeault and Giuseppe Toscani}
\address[J. Dolbeault]
{Ceremade (UMR CNRS no. 7534), Universit\'e Paris-Dauphine, Place de Lattre de Tassigny, F-75775 Paris C\'edex 16, France}
\email{dolbeaul@ceremade.dauphine.fr}
\address[G. Toscani]
{University of Pavia Department of Mathematics, Via Ferrata~1, 27100 Pavia, Italy} \email{giuseppe.toscani@unipv.it}
\date{\today}
\begin{document}
\begin{abstract}
We consider a family of Gagliardo-Nirenberg-Sobolev interpolation inequalities which interpolate between Sobolev's inequality and the logarithmic Sobolev inequality, with optimal constants. The difference of the two terms in the interpolation inequalities (written with optimal constant) measures a distance to the manifold of the optimal functions. We give an explicit estimate of the remainder term and establish an improved inequality, with explicit norms and fully detailed constants. Our approach is based on nonlinear evolution equations and improved entropy - entropy production estimates along the associated flow. Optimizing a relative entropy functional with respect to a scaling parameter, or handling properly second moment estimates, turns out to be the central technical issue. This is a new method in the theory of nonlinear evolution equations, which can be interpreted as the best fit of the solution in the asymptotic regime among all asymptotic profiles.
\end{abstract}

\thispagestyle{empty} \maketitle

\keywords{\noindent\emph{Keywords.\/} Gagliardo-Nirenberg-Sobolev inequalities; improved inequalities; manifold of optimal functions; entropy - entropy production method; fast diffusion equation; Barenblatt solutions; second moment; intermediate asymptotics; sharp rates; optimal constants\smallskip\par\noindent
\emph{AMS subject classification (2010)}: 26D10; 46E35; 35K55
}
\section{Introduction and main results}\label{Sec:Intro}

Consider the following sub-family of the Gagliardo-Nirenberg-Sobolev inequalities
\be{Ineq:GN}
\nrm f{2\,p}\le\mathcal C_{p,d}^{\rm GN}\,\nrm{\nabla f}2^\theta\,\nrm f{p+1}^{1-\theta}
\ee
with $\theta=\theta(p):=\frac{p-1}p\,\frac d{d+2-p\,(d-2)}$, $1<p\le\frac d{d-2}$ if $d\ge3$ and $1<p<\infty$ if $d=2$. Such an inequality holds for any smooth function $f$ with sufficient decay at infinity and, by density, for any function $f\in\L^{p+1}(\R^d)$ such that $\nabla f$ is square integrable. We shall assume that $\mathcal C_{p,d}^{\rm GN}$ is the best possible constant in \eqref{Ineq:GN}. In ~\cite{MR1940370}, it has been established that equality holds in \eqref{Ineq:GN} if $f=F_p$ with
\be{Eqn:Optimal}
F_p(x)=(1+|x|^2)^{-\frac 1{p-1}}\quad\forall\;x\in\R^d
\ee
and that all extremal functions are equal to $F_p$ up to a multiplication by a constant, a translation and a scaling. See Appendix~\ref{Sec:Appendix} for an expression of~$\mathcal C_{p,d}^{\rm GN}$. If $d\ge 3$, the limit case $p=d/(d-2)$ corresponds to Sobolev's inequality and one recovers the optimal functions found by T.~Aubin and G.~Talenti in \cite{MR0448404,MR0463908}. When $p\to 1$, the inequality becomes an equality, so that we may differentiate both sides with respect to $p$ and recover the euclidean logarithmic Sobolev inequality in optimal scale invariant form (see \cite{Gross75,MR479373,MR1940370} for details).

It is rather straightforward to observe that Inequality~\eqref{Ineq:GN} can be rewritten, in a non-scale invariant form, as a \emph{non-homogeneous Gagliardo-Nirenberg-Sobolev inequality:} for any $f\in\L^{p+1}\cap\mathcal D^{1,2}(\R^d)$,
\be{Ineq:GN-NonHom0}
\ird{|\nabla f|^2}+\ird{|f|^{p+1}}\ge\K{p,d}\(\ird{|f|^{2\,p}}\)^\gamma
\ee
with
\be{gamma}
\gamma=\gamma(p,d):=\tfrac{d+2-p\,(d-2)}{d-p\,(d-4)}\;.
\ee
The optimal constant $\K{p,d}$ can easily be related with $\mathcal C_{p,d}^{\rm GN}$. Indeed, by optimizing the left hand side of~\eqref{Ineq:GN-NonHom0} written for $f_\lambda(x):=\lambda^{d/(2\,p)}\,f(\lambda\,x)$ for any $x\in\R^d$, with respect to $\lambda>0$, one recovers that~\eqref{Ineq:GN-NonHom0} and \eqref{Ineq:GN} are equivalent. The detailed relation between $\K{p,d}$ and $\mathcal C_{p,d}^{\rm GN}$ can be found in Section~\ref{Sec:Conclusion}.

\medskip Define now
\[
C_M:=\(\frac {M_*}M\)^{\frac{2\,(p-1)}{d-p\,(d-4)}}\kern -4pt,\quad M_*:= \ird{\(1+|x|^2\)^{-\frac{2\,p}{p-1}}}= \pi^\frac d2\,\frac{\Gamma\big(\frac{d-p\,(d-4)}{2\,(p-1)}\big)}{\Gamma\big(\frac{2\,p}{p-1}\big)}\;.
\]
Consider next a generic, non-negative optimal function,
\[
f_{M,y,\sigma}^{(p)}(x):=\sigma^{-\frac d{4\,p}}\(C_M+\frac1\sigma\,|x-y|^2\)^{-\frac 1{p-1}}\quad\forall\;x\in\R^d
\]
and let us define the manifold of the optimal functions as
\[
\mathfrak M_d^{(p)}:=\left\{f_{M,y,\sigma}^{(p)}\;:\;(M,y,\sigma)\in\mathcal M_d\right\}\,.
\]
We shall measure the distance to $\mathfrak M_d^{(p)}$ with the functional
\[
\mathcal R^{(p)}[f]:=\inf_{g\in\mathfrak M_d^{(p)}}\ird{\left[g^{1-p}\(|f|^{2\,p}-g^{2\,p}\)-\tfrac{2\,p}{p+1}\big(|f|^{p+1}-g^{p+1}\big)\right]}\;.
\]
To simplify our statement, we will introduce a normalization constraint and assume that $f\in\L^{2\,p}(\R^2,(1+|x|^2)\,dx)$ is such that
\be{Normalization}
\frac{\ird{|x|^2\,|f|^{2\,p}}}{\(\ird{|f|^{2\,p}}\)^\gamma}=\tfrac{d\,(p-1)\,\sigma_*\,M_*^{\gamma-1}}{d+2-p\,(d-2)}\,,\;\sigma_*(p):=\({\displaystyle 4}\,\tfrac{d+2-p\,(d-2)}{(p-1)^2\,(p+1)}\)^\frac{4\,p}{d-p\,(d-4)}.
\ee
Such a condition is not restrictive, as it is always possible to cover the general case by rescaling the inequality, but significantly simplifies the expressions. As we shall see in the proof, the only goal is to fix $\sigma=1$.

Our main result goes as follows.
\begin{theorem}\label{Thm:GN} Let $d\ge 2$, $p>1$ and assume that $p<d/(d-2)$ if $d\ge3$. For any $f\in\L^{p+1}\cap\mathcal D^{1,2}(\R^d)$ such that Condition~\eqref{Normalization} holds, we have
\[
\ird{|\nabla f|^2}+\ird{|f|^{p+1}}-\K{p,d}\(\ird{|f|^{2\,p}}\)^\gamma\ge\mathsf C_{p,d}\,\frac{\(\mathcal R^{(p)}[f]\)^2}{\(\ird{\!|f|^{2\,p}}\)^\gamma}
\]
where $\gamma$ is given by \eqref{gamma}.\end{theorem}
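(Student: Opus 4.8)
The plan is to recast the statement as an \emph{improved entropy -- entropy production estimate} for a fast diffusion equation, and to prove that estimate by integrating in time a refined Bakry--\'Emery inequality along the associated flow, after optimizing over the scale of the limiting (Barenblatt) profile. For the reduction I would set $u:=|f|^{2\,p}$ and $m:=\tfrac{p+1}{2\,p}$, which lies in $\bigl(\tfrac{d-1}d,1\bigr)$ under the hypotheses on $p$, and pass to self-similar variables, in which the relevant evolution is the rescaled fast diffusion equation
\[
\partial_t u=\Delta u^m+\beta\,\nabla\!\cdot(x\,u)\,,\qquad \beta=\beta(p,d):=\tfrac{(p+1)^2\,(p-1)}{2\,(d-p\,(d-2))}>0\,,
\]
the drift coefficient $\beta$ being fixed by matching coefficients — this is where the non-scale-invariance of the left-hand side is absorbed. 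Its stationary states are the Barenblatt profiles $\mathcal B_\sigma$, characterised by $\tfrac m{m-1}\,\mathcal B_\sigma^{m-1}+\tfrac\beta{2\,\sigma}\,|x|^2=\mathrm{const}$, and the substitution $u\mapsto u^{1/(2p)}$ carries the Barenblatt family onto the manifold $\mathfrak M_d^{(p)}$; in particular $\mathcal B_1^{1/(2p)}$ is, up to translation and a multiplicative constant, the function \eqref{Eqn:Optimal}. Writing $\mathcal B:=\mathcal B_1$ and
\[
\mathcal F[u]:=\ird{\Bigl(\tfrac{u^m-\mathcal B^m}{m-1}+\tfrac\beta2\,|x|^2\,(u-\mathcal B)\Bigr)}\,,\qquad \mathcal I[u]:=\ird{u\,\Bigl|\nabla\Bigl(\tfrac m{m-1}\,u^{m-1}+\tfrac\beta2\,|x|^2\Bigr)\Bigr|^2}\,,
\]
a direct computation (integration by parts, mass conservation, and the reference profile normalized according to \eqref{Normalization}, whose only role is to force the best fitting scale to equal $\sigma=1$) shows that the left-hand side of the theorem equals a positive multiplicative constant, depending only on $p$ and $d$, times the deficit $\mathcal I[u]-2\,\beta\,\mathcal F[u]$, while $\mathcal R^{(p)}[f]$ equals another such constant times $\inf_{\sigma>0}\mathcal F[u\,|\,\mathcal B_\sigma]$, the relative entropy (Bregman divergence) of $u$ with respect to the \emph{best matching} Barenblatt profile. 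Hence it suffices to prove the improved inequality $\mathcal I[u]-2\,\beta\,\mathcal F[u]\ \ge\ c_{p,d}\,M^{-\gamma}\,\bigl(\inf_{\sigma>0}\mathcal F[u\,|\,\mathcal B_\sigma]\bigr)^{2}$, with $M:=\ird u$ and an explicit $c_{p,d}>0$.

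Next I would run this equation from $u(0)=|f|^{2\,p}$. Along the flow $\tfrac d{dt}\mathcal F[u(t)]=-\mathcal I[u(t)]$, so $\delta(t):=\mathcal I[u(t)]-2\,\beta\,\mathcal F[u(t)]\ge0$ by \eqref{Ineq:GN}; moreover $u(t)\to\mathcal B$, one has $\mathcal F[u(t)]\le\mathcal F[u(0)]\,e^{-2\,\beta\,t}$, and the second moment solves the linear ODE $\tfrac d{dt}\mmt[u(t)]=2\,d\,\ird{u(t)^m}-2\,\beta\,\mmt[u(t)]$, converging to $\mmt[\mathcal B]$. The heart of the proof is a carr\'e du champ / $\Gamma_2$ computation of $\tfrac d{dt}\delta(t)$; after integration by parts (justified a posteriori by the regularity and decay estimates for the flow) it takes the form $\tfrac d{dt}\delta(t)\le-\mathcal G[u(t)]\le0$, with $\mathcal G\ge0$ the sum of a manifestly non-negative traceless-Hessian term and a remainder that — once the free energy is measured relative to the best matching profile $\mathcal B_{\sigma(t)}$, equivalently once the mismatch of second moments is removed — can be bounded below by $2\,\beta\,\delta(t)+c_{p,d}\,M^{-\gamma}\bigl(\inf_\sigma\mathcal F[u(t)\,|\,\mathcal B_\sigma]\bigr)^2$ (possibly with $2\beta$ replaced by $2\beta+\varepsilon$). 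Multiplying this differential inequality by $e^{2\,\beta\,t}$ and integrating over $[0,\infty)$ — using that $\delta(t)=o(e^{-2\beta t})$, which follows from the previous line together with $u(t)\to\mathcal B$ — gives
\[
\delta(0)\ \ge\ \int_0^\infty e^{2\,\beta\,t}\,\bigl(\mathcal G[u(t)]-2\,\beta\,\delta(t)\bigr)\,dt\ \ge\ \frac{c_{p,d}}{M^{\gamma}}\int_0^\infty e^{2\,\beta\,t}\,\bigl(\inf_\sigma\mathcal F[u(t)\,|\,\mathcal B_\sigma]\bigr)^2\,dt\,;
\]
a two-sided bound on the decay of $t\mapsto\inf_\sigma\mathcal F[u(t)\,|\,\mathcal B_\sigma]$ along the flow then produces $\int_0^\infty e^{2\beta t}\bigl(\inf_\sigma\mathcal F[u(t)\,|\,\mathcal B_\sigma]\bigr)^2\,dt\ge c'_{p,d}\bigl(\inf_\sigma\mathcal F[u(0)\,|\,\mathcal B_\sigma]\bigr)^2$, which is the reduced inequality with an explicit $\mathsf C_{p,d}$. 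A scaling and density argument finally removes \eqref{Normalization} and the extra integrability, recovering the statement for every $f\in\L^{p+1}\cap\mathcal D^{1,2}(\R^d)$.

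The decisive difficulty — the step I expect to be the main obstacle — is the refined estimate above: producing a remainder $\mathcal G$ whose excess over $2\,\beta\,\delta$ is genuinely quadratic in the best matching relative entropy. This is exactly where ``optimizing the relative entropy with respect to the scaling parameter'', equivalently ``handling the second moment properly'', is unavoidable: it discards the slow mode associated with dilations, which would otherwise saturate the Bakry--\'Emery inequality and leave no room for a quadratic gain, and it forces one to control the a priori non-constant best fitting scale $\sigma(t)$ and to keep every constant explicit. A further, more technical point is making the whole flow argument rigorous on non-smooth solutions, via an approximation procedure together with the available a priori $\L^\infty$, regularity and moment estimates for the fast diffusion equation.
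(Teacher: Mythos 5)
Your overall strategy coincides with the paper's: set $u=|f|^{2p}$, $m=\frac{p+1}{2p}\in(m_1,1)$, run the rescaled fast diffusion flow, match the Barenblatt profile through the second moment (so that the dilation mode is discarded and Condition~\eqref{Normalization} just fixes $\sigma=1$), identify the deficit with $\mathcal I-4\,\mathcal F$ and $\mathcal R^{(p)}[f]$ with the best-matching relative entropy, and extract the quadratic remainder by differentiating the Fisher information along the flow and integrating over $(0,\infty)$. The gap is in your final integration step. You postulate a pointwise inequality whose gain over $2\beta\,\delta$ is quadratic in the relative entropy, multiply by $e^{2\beta t}$, integrate, and then invoke ``a two-sided bound on the decay of $t\mapsto\inf_\sigma\mathcal F[u(t)\,|\,\mathcal B_\sigma]$'' to get $\int_0^\infty e^{2\beta t}\,\mathcal F(t)^2\,dt\ge c\,\mathcal F(0)^2$. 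No such lower bound on the decay exists: the entropy -- entropy production inequality gives only the upper bound $\mathcal F(t)\le\mathcal F(0)\,e^{-2\beta t}$, and there is no reverse inequality $\mathcal I\le C\,\mathcal F$ (the initial Fisher information can be arbitrarily large relative to the initial entropy), so $\mathcal F$ may collapse by an arbitrarily large factor in arbitrarily short time and the weighted integral cannot be bounded below by a universal multiple of $\mathcal F(0)^2$. The same obstruction reappears in the unweighted variant $\delta(0)\ge c\int_0^\infty\mathcal F(t)^2\,dt$.

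The idea your proposal is missing is that the extra term created by the time dependence of the best-matching scale is not (and need not be) a multiple of $\mathcal F^2$ pointwise: it is a multiple of $j\,\sigma'/\sigma$, where $j(t)=\mathcal I_{\sigma(t)}[u(\cdot,t)]$ and $f(t)=\mathcal F_{\sigma(t)}[u(\cdot,t)]$. Since $\sigma'=-\kappa_1\,\sigma^{\frac d2(m-m_c)}f$ and $j=-f'$, this term equals $\kappa_1\kappa_2\,\sigma^{-\frac d2(1-m)}\,f\,f'$, i.e.\ --- up to the factor $\sigma^{-\frac d2(1-m)}$, which is controlled by its value at $t=0$ because $\sigma$ is nonincreasing and $f\,f'\le0$ --- an exact time derivative of $\tfrac12 f^2$. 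Integrating $j'-4f'\le\kappa_1\kappa_2\,\sigma_0^{-\frac d2(1-m)}f\,f'$ over $(0,\infty)$ then yields $j_0-4f_0\ge\tfrac12\,\kappa_1\kappa_2\,\sigma_0^{-\frac d2(1-m)}f_0^2$ using nothing beyond $f,j\to0$: no exponential weight and no lower bound on the decay of the entropy are needed. Replacing your weighted-integral step by this exact integration gives the paper's Theorem~\ref{Thm:IN}, after which your reduction to the stated inequality (and the passage from $\mathsf R^{(p)}$ to $\mathcal R^{(p)}$ by optimizing over $C$, $y$, $\sigma$) goes through.
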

\noindent The constant $\mathsf C_{p,d}$ is positive and explicit. We do not know its optimal value. See Appendix~\ref{Sec:Appendix} for an expression of $\mathsf C_{p,d}$, which is such that
\[
\lim_{p\to1_+}\mathsf C_{p,d}=0\quad\mbox{and}\quad\lim_{p\to d/(d-2)_-}\mathsf C_{p,d}=0\;.
\]
The space $\L^{p+1}\cap\mathcal D^{1,2}(\R^d)$ is the natural space for Gagliardo-Nirenberg inequalities as it can be characterized as the completion of the space of smooth functions with compact support with respect to the norm $\|\cdot\|$ such that $\|f\|^2=\nrm{\nabla f}2^2+\nrm f{p+1}^2$. In this paper, we shall also use the notations $\nrm f{p,q}:=(\ird{|x|^p\,|f|^q})^{1/q}$, so that $\nrm fq=\nrm f{0,q}$.

\medskip Under Condition~\eqref{Normalization}, we shall deduce from Theorem~\ref{Thm:CK} that
\be{Ineq:CKp}
\mathcal R^{(p)}[f]\ge\mathsf C_{\rm CK}\,\nrm f{2\,p}^{2\,p\,(\gamma-2)}\,\inf_{g\in\mathfrak M_d^{(p)}}\nrm{|f|^{2\,p}-g^{2\,p}}1^2
\ee
with $\delta=d+2-p\,(d+6)$ for some constant $\mathsf C_{\rm CK}$ whose expression is given in Section~\ref{Sec:CK}, Eq.~\eqref{CK}. Putting this estimate together with the result of Theorem~\ref{Thm:GN}, with
\[
\mathfrak C_{p,d}:=\mathsf C_{d,p}\,{\mathsf C_{\rm CK}}^2\;,
\]
we obtain the following estimate.
\begin{corollary}\label{Cor:GN} Under the same assumptions as in Theorem~\ref{Thm:GN}, we have
\begin{multline*}
\ird{|\nabla f|^2}+\ird{|f|^{p+1}}-\K{p,d}\(\ird{|f|^{2\,p}}\)^\gamma\\
\ge\mathfrak C_{p,d}\,\nrm f{{2\,p}}^{2\,p\,(\gamma-4)}\,\inf_{g\in\mathfrak M_d{(p)}}\nrm{|f|^{2\,p}-g^{2\,p}}1^4\;.
\end{multline*}
\end{corollary}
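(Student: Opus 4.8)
\textbf{Proof proposal for Corollary \ref{Cor:GN}.}

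The plan is to simply chain the two estimates already in hand. Theorem \ref{Thm:GN} gives a lower bound for the deficit $\ird{|\nabla f|^2}+\ird{|f|^{p+1}}-\K{p,d}\(\ird{|f|^{2\,p}}\)^\gamma$ in terms of $\mathsf C_{p,d}\,\(\mathcal R^{(p)}[f]\)^2\big/\(\ird{|f|^{2\,p}}\)^\gamma$, while the Csisz\'ar--Kullback-type inequality \eqref{Ineq:CKp}, which we deduce from Theorem \ref{Thm:CK}, controls $\mathcal R^{(p)}[f]$ from below by $\mathsf C_{\rm CK}\,\nrm f{2\,p}^{2\,p\,(\gamma-2)}\,\inf_{g\in\mathfrak M_d^{(p)}}\nrm{|f|^{2\,p}-g^{2\,p}}1^2$. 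Since the right-hand side of \eqref{Ineq:CKp} is non-negative, squaring it preserves the inequality, and substituting into the bound from Theorem \ref{Thm:GN} yields
\[
\frac{\(\mathcal R^{(p)}[f]\)^2}{\(\ird{|f|^{2\,p}}\)^\gamma}\ge\mathsf C_{\rm CK}^2\,\frac{\nrm f{2\,p}^{4\,p\,(\gamma-2)}}{\(\ird{|f|^{2\,p}}\)^\gamma}\,\inf_{g\in\mathfrak M_d^{(p)}}\nrm{|f|^{2\,p}-g^{2\,p}}1^4\,.
\]

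Next I would collect the powers of $\nrm f{2\,p}$. Writing $\(\ird{|f|^{2\,p}}\)^\gamma=\nrm f{2\,p}^{2\,p\,\gamma}$, the prefactor becomes $\nrm f{2\,p}^{4\,p\,(\gamma-2)-2\,p\,\gamma}=\nrm f{2\,p}^{2\,p\,(\gamma-4)}$, which is exactly the exponent appearing in the statement. Multiplying through by $\mathsf C_{p,d}$ and setting $\mathfrak C_{p,d}:=\mathsf C_{d,p}\,\mathsf C_{\rm CK}^2$ gives precisely the claimed inequality
\[
\ird{|\nabla f|^2}+\ird{|f|^{p+1}}-\K{p,d}\(\ird{|f|^{2\,p}}\)^\gamma\ge\mathfrak C_{p,d}\,\nrm f{2\,p}^{2\,p\,(\gamma-4)}\,\inf_{g\in\mathfrak M_d^{(p)}}\nrm{|f|^{2\,p}-g^{2\,p}}1^4\,,
\]
valid under Condition \eqref{Normalization}, which is the hypothesis shared with Theorem \ref{Thm:GN}.

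There is essentially no obstacle here beyond bookkeeping: the corollary is a formal consequence of Theorem \ref{Thm:GN} and estimate \eqref{Ineq:CKp}. The only points needing care are, first, that one may legitimately square \eqref{Ineq:CKp} — this is fine because both sides are manifestly non-negative (the left side is an infimum of the functional $\mathcal R^{(p)}$, which is non-negative by the non-homogeneous inequality \eqref{Ineq:GN-NonHom0} applied to the optimal profile comparison, and the right side is a constant times a squared norm) — and second, that the exponents of $\nrm f{2\,p}$ combine as stated; the arithmetic $4\,p\,(\gamma-2)-2\,p\,\gamma=2\,p\,(\gamma-4)$ is immediate. The genuinely substantive content, namely the proof of \eqref{Ineq:CKp} itself, is deferred to Section \ref{Sec:CK} and relies on Theorem \ref{Thm:CK}, so here we only need to invoke it.

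\finprf
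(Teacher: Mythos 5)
Your proof is correct and follows exactly the paper's own route: the corollary is obtained by chaining Theorem~\ref{Thm:GN} with the squared Csisz\'ar--Kullback estimate~\eqref{Ineq:CKp}, and your exponent bookkeeping $4\,p\,(\gamma-2)-2\,p\,\gamma=2\,p\,(\gamma-4)$ is the same as what the definition of $\mathfrak C_{p,d}=\mathsf C_{d,p}\,\mathsf C_{\rm CK}^2$ encodes. The only element of the paper's proof you omit is a closing density remark: since $\nrm f{2,2\,p}$ does not appear in the final inequality, the authors extend it from smooth compactly supported functions to all of $\L^{p+1}\cap\mathcal D^{1,2}(\R^d)$ by density, so as to drop the implicit finiteness of the second moment; under the literal hypotheses (Condition~\eqref{Normalization} forces $\nrm f{2,2\,p}<\infty$) this step is not needed, so your argument does establish the statement as written.
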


The critical case $p=d/(d-2)$ corresponding to Sobolev's inequality raises a number of difficulties which are not under control at this stage. However, results which have been obtained in such a critical case, by different methods, are the main motivation for the present paper. 

In \cite[Question (c), p.~75]{MR790771}, H.~Brezis and E.~Lieb asked the question of what kind of distance to $\mathfrak M_d^{(p)}$ is controlled by the difference of the two terms in the critical Sobolev inequality written with an optimal constant. Some partial answers have been provided over the years, of which we can list the following ones. First G.~Bianchi and H.~Egnell gave in \cite{MR1124290} a result based on the concen\-tration-compactness method, which determines a non-constructive estimate for a distance to the set of optimal functions. In \cite{MR2538501}, A.~Cianchi, N.~Fusco, F.~Maggi and A.~Pratelli established an improved inequality using symmetrization methods. Also see~\cite{MR2508840} for an overview of various results based on such methods. Recently another type of improvement, which relates Sobolev's inequality to the Hardy-Little\-wood-Sobolev inequalities, has been established in \cite{1101}, based on the flow of a nonlinear diffusion equation, in the regime of extinction in finite time. Theorem~\ref{Thm:GN} does not provide an answer in the critical case, but gives an improvement with fully explicit constants in the subcritical regime. Our method of proof enlightens a new aspect of the problem. Indeed, Theorem~\ref{Thm:GN} shows that the difference of the two terms in the critical Sobolev inequality provides a better control under the additional information that $\nrm f{2,2\,p}$ is finite. Such a condition disappears in the setting of Corollary~\ref{Cor:GN}.

In this paper, our goal is to establish an improvement of Gagliardo-Nirenberg inequalities based on the flow of the \emph{fast diffusion equation} in the regime of convergence towards Barenblatt self-similar profiles, with an explicit measure of the distance to the set of optimal functions. Our approach is based on a \emph{relative entropy} functional. The method relies on a recent paper, \cite{1004}, which is itself based on a long series of studies on intermediate asymptotics of the fast diffusion equation, and on the entropy - entropy production method introduced in \cite{MR772092,AMTU} in the linear case and later extended to nonlinear diffusions: see \cite{MR760591,MR760592,MR1940370,MR1777035,MR1853037}. In this setting, having a finite second moment is crucial. Let us give some explanations.

\medskip Consider the fast diffusion equation with exponent $m$ given in terms of the exponent $p$ of Theorem~\ref{Thm:GN} by
\be{Eqn:pm}
p=\frac 1{2\,m-1}\quad\Longleftrightarrow\quad m=\frac{p+1}{2\,p}\;.
\ee
More specifically, for $m\in(0,1)$, we shall consider the solutions of
\be{Eqn2}
\frac{\partial u}{\partial t}+\nabla\cdot\left[u\,\(\eta\,\nabla u^{m-1}-2\,x\)\right]=0\quad t>0\;,\quad x\in\R^d
\ee
with initial datum $u(t=0,\cdot)=u_0$. Here $\eta$ is a positive parameter which does not depend on $t$. Let $u_\infty$ be the unique stationary solution such that $M=\ird u=\ird{u_\infty}$. It is given by
\[
u_\infty(x)=\(K+\frac 1\eta\,|x|^2\)^\frac 1{m-1}\quad\forall\;x\in\R^d
\]
for some positive constant $K$ which is uniquely determined by $M$. The following exponents are associated with the fast diffusion equation~\eqref{Eqn2} and will be used all over this paper:
\[
m_c:=\frac{d-2}d\;,\quad m_1:=\frac{d-1}d\quad\mbox{and}\quad\widetilde m_1:=\frac d{d+2}\;.
\]
To the critical exponent $2\,p=2\,d/(d-2)$ for Sobolev's inequality corresponds the critical exponent $m_1$ for the fast diffusion equation. For $d\ge 3$, the condition $p\in(1,d/(d-2))$ in Theorem~\ref{Thm:GN} is equivalent to $m\in(m_1,1)$ while for $d=2$, $p\in(1,\infty)$ means $m\in(1/2,1)$.

It has been established in \cite{MR760591,MR760592} that the \emph{relative entropy} (or \emph{free energy})
\[
\mathcal F[u|u_\infty]:=\frac 1{m-1}\ird{\big[u^m-u_\infty^m-m\,u_\infty^{m-1}\,(u-u_\infty)\big]}
\]
decays according~to
\[
\frac d{dt}\,\mathcal F[u(\cdot,t)|u_\infty]=-\mathcal I[u(\cdot,t)|u_\infty]
\]
if $u$ is a solution of \eqref{Eqn2}, where
\[
\mathcal I[u(\cdot,t)|u_\infty]:=\eta\,\frac m{1-m}\ird{u\,\left|\nabla u^{m-1}-\nabla u_\infty^{m-1}\right|^2}
\]
is the \emph{entropy production term} or \emph{relative Fisher information.} If $m\in[m_1,1)$, according to \cite{MR1940370}, these two functionals are related by a Gagliardo-Niren\-berg interpolation inequality, namely
\be{Ineq:E-EP}
\mathcal F[u|u_\infty]\le\frac 14\,\mathcal I[u|u_\infty]\;.
\ee
We shall give a concise proof of this inequality in the next section (see Remark~\ref{Rem:BEmethod}) based on the entropy - entropy production method, which amounts to relate $\frac d{dt}\,\mathcal I[u(\cdot,t)|u_\infty]$ and $\mathcal I[u(\cdot,t)|u_\infty]$. We shall later replace the diffusion parameter $\eta$ in~\eqref{Eqn2} by a time-dependent coefficient $\sigma(t)$, which is itself computed using the second moment of $u$, $\ird{|x|^2\,u(x,t)}$. By doing so, we will be able to capture the \emph{best matching} Barenblatt solution and get improved decay rates in the entropy - entropy production inequality. Elementary estimates allow to rephrase these improved rates into improved functional inequalities for $f$ such that $|f|^{2\,p}=u$, for any $p\in(1,d/(d-2))$, as in Theorem~\ref{Thm:GN}.

\medskip This paper is organized as follows. In Section~\ref{Sec:Bakry-Emery}, we apply the entropy - entropy production method to the fast diffusion equation as in \cite{MR1853037}. The key computation, without justifications for the integrations by parts, is reproduced here since we need it later in Section~\ref{Sec:Proofs}, in the case of a time-dependent diffusion coefficient. Next, in Section~\ref{Sec:CK}, we establish a new estimate of Csisz\'ar-Kullback type. By requiring a condition on the second moment, we are able to produce a new estimate which was not known before, namely to directly control the difference of the solution with a Barenblatt solution in~$\L^1(\R^d)$.

Second moment estimates are the key of a recent paper and we shall primarily refer to \cite{1004} in which the asymptotic behaviour of the solutions of the fast diffusion equation was studied. In Section~\ref{Sec:Matching} we recall the main results that were proved in \cite{1004}, and that are also needed in the present paper.

With these preliminaries in hand, an improved entropy - entropy production inequality is established in Section~\ref{Sec:ScaledEntropy-EP}, which is at the core of our paper. It is known since \cite{MR1940370} that entropy - entropy production inequalities amount to optimal Gagliardo-Nirenberg-Sobolev inequalities. Such a rephrasing of our result in a more standard form of functional inequalities is done in Section~\ref{Sec:Proofs}, which contains the proof of Theorem~\ref{Thm:GN}. Further observations have been collected in Section~\ref {Sec:Conclusion}. One of the striking results of our approach is that all constants can be explicitly computed. This is somewhat technical although not really difficult. To make the reading easier, explicit computations have been collected in Appendix~\ref{Sec:Appendix}.

\section{The entropy - entropy production method}\label{Sec:Bakry-Emery}

Consider a solution $u=u(x,t)$ of Eq.~\eqref{Eqn2} and define
\[
z(x,t):= \eta\,\nabla u^{m-1}-2\,x
\]
so that Eq.~\eqref{Eqn2} can be rewritten as
\[
\frac{\partial u}{\partial t}+\nabla\cdot\(u\, z\)=0\;.
\]
To keep notations compact, we shall use the following conventions. If $A=(A_{ij})_{i,j=1}^d$ and $B=(B_{ij})_{i,j=1}^d$ are two matrices, let $A:B=\sum_{i,j=1}^dA_{ij}\,B_{ij}$ and $|A|^2=A:A$. If $a$ and $b$ take values in $\R^d$, we adopt the definitions:
\[
a\cdot b=\sum_{i=1}^da_i\,b_i\;,\quad\nabla\cdot a=\sum_{i=1}^d\frac{\partial a_i} {\partial x_i}\;,\quad a\otimes b=(a_i\,b_j)_{i,j=1}^d\;,\quad\nabla\otimes a=\(\frac{\partial a_j}{\partial x_i}\)_{i,j=1}^d\;.
\]

Later we will need a version of the entropy - entropy production method in case of a time-dependent diffusion coefficient. Before doing so, let us recall the key computation of the standard method. With the above notations, it is straightforward to check that
\[
\frac{\partial z}{\partial t}=\eta\,(1-m)\nabla\big(u^{m-2}\,\nabla\cdot\(u\, z\)\big)\quad\mbox{and}\quad\nabla\otimes z=\eta\,\nabla\otimes\nabla u^{m-1}-2\,\mathrm{Id}\;.
\]
With these definitions, the time-derivative of $\frac{1-m}m\,\eta\,\mathcal I[u|u_\infty]=\ird{u\,|z|^2}$ can be computed as
\[
\frac d{dt}\ird{u\,|z|^2}=\ird{\frac{\partial u}{\partial t}\,|z|^2}+2\ird{u\,z\cdot\frac{\partial z}{\partial t}}\;.
\]
The first term can be evaluated by
\begin{eqnarray*}
&&\hspace*{-12pt}\ird{\frac{\partial u}{\partial t}\,|z|^2\kern-3pt}\\
&=&-\kern-2pt\ird{\nabla\cdot\(u\, z\)\,|z|^2\kern-3pt}\\
&=&2\kern-2pt\ird{u\,z\otimes z:\nabla\otimes z}\\
&=&2\,\eta\kern-2pt\ird{u\,z\otimes z:\nabla\otimes\nabla u^{m-1}}-\,4\kern-2pt\ird{u\,|z|^2\kern-3pt}\\
&=&2\,\eta\,(1-m)\kern-2pt\ird{u^{m-2}\,\nabla u\otimes\nabla:(u\,z\otimes z)}-\,4\kern-2pt\ird{u\,|z|^2\kern-3pt}\\
&=&2\,\eta\,(1-m)\kern-2pt\ird{u^{m-2}\(\nabla u\cdot z\)^2\kern-3pt}+\,2\,\eta\,(1-m)\kern-2pt\ird{u^{m-1}\(\nabla u\cdot z\)\(\nabla\cdot z\)}\\
&&+\,2\,\eta\,(1-m)\kern-2pt\ird{u^{m-1}\(z\otimes\nabla u\):\(\nabla\otimes z\)}-\,4\kern-2pt\ird{u\,|z|^2\kern-3pt}\;.
\end{eqnarray*}
The second term can be evaluated by
\begin{eqnarray*}
&&\hspace*{-12pt}2\kern-2pt\ird{u\,z\cdot\frac{\partial z}{\partial t}}\\
&=&2\,\eta\,(1-m)\kern -2pt\kern-2pt\ird{(u\,z\cdot\nabla)\(u^{m-2}\,\nabla\cdot(u\,z)\)}\\
&=&-2\,\eta\,(1-m)\kern -2pt\kern-2pt\ird{u^{m-2}\,\big(\nabla\cdot(u\,z)\big)^2\kern-3pt}\\
&=&-2\,\eta\,(1-m)\kern -2pt\kern-2pt\ird{\big[u^m(\nabla\cdot z)^2+2\,u^{m-1}\!\(\nabla u\cdot z\)\(\nabla\cdot z\)+u^{m-2}\!\(\nabla u\cdot z\)^2\big]}\;.
\end{eqnarray*}
Summarizing, we have found that
\begin{multline*}
\hspace*{-12pt}\ird{\frac{\partial u}{\partial t}\,|z|^2\kern-3pt}+4\ird{u\,|z|^2\kern-3pt}\\
=-2\,\eta\,(1-m)\kern -3pt\int_{\R^d}\kern -6pt u^{m-2}\big[u^2(\nabla\cdot z)^2+ u\(\nabla u\cdot z\)\(\nabla\cdot z\)
-u\,(z\otimes \nabla u):\!\(\nabla\!\otimes z\)\!\big]\;dx\;.
\end{multline*}
Using the fact that
\[
\frac{\partial^2z^j}{\partial x_i\,\partial x_j}=\frac{\partial^2z^i}{\partial x_j^2}\;,
\]
we obtain that
\begin{multline*}
\ird{u^{m-1}\,\(\nabla u\cdot z\)\,\(\nabla\cdot z\)}\\
=-\frac 1m\ird{u^m\,\(\nabla \cdot z\)^2}-\frac 1m\ird{u^m\,\sum_{i,j=1}^dz^i\,\frac{\partial^2z^j}{\partial x_i\,\partial x_j}}
\end{multline*}
and
\begin{multline*}
-\ird{u^{m-1}\,(z\otimes \nabla u):\!\(\nabla\!\otimes z\)}\\
=\frac 1m\ird{u^m\,|\nabla z|^2}+\frac 1m\ird{u^m\,\sum_{i,j=1}^dz^i\,\frac{\partial^2z^i}{\partial x_j^2}}
\end{multline*}
can be combined to give
\begin{multline*}
\ird{u^{m-2}\big[\,u\,\(\nabla u\cdot z\)\,\(\nabla\cdot z\)-u\,\nabla u\otimes z:\nabla\otimes z\big]}\\
=-\frac 1m\ird{u^m\,\(\nabla \cdot z\)^2}+\frac 1m\ird{u^m\,|\nabla z|^2}\;.
\end{multline*}
This shows that
\begin{multline}\label{Eqn:BakryEmery}
\frac d{dt}\ird{u\,|z|^2}+4\ird{u\,|z|^2}\\
=-2\,\eta\,\frac{1-m}m\ird{u^m\(|\nabla z|^2-(1-m)\,\(\nabla \cdot z\)^2\)}\;.
\end{multline}
By the arithmetic geometric inequality, we know that
\[
|\nabla z|^2-(1-m)\,\(\nabla \cdot z\)^2\ge0
\]
if $1-m\le1/d$, that is, if $m\ge m_1$. Altogether, we have formally established the following result.
\begin{proposition}\label{Thm:BE} Let $d\ge 1$, $m\in(m_1,1)$ and assume that $u$ is a non-negative solution of \eqref{Eqn2} with initial datum $u_0$ in $\L^1(\R^d)$ such that $u_0^m$ and $x\mapsto |x|^2\,u_0$ are both integrable on $\R^d$. With the above defined notations, we get that
\[
\frac d{dt}\,\mathcal I[u(\cdot,t)|u_\infty]\le-\,4\,\mathcal I[u(\cdot,t)|u_\infty]\quad\forall\;t>0\;.
\]
\end{proposition}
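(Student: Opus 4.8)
The plan is to turn the formal identity \eqref{Eqn:BakryEmery} into the announced differential inequality by recognizing each side as a multiple of a known entropy-type quantity and discarding a manifestly nonnegative term. First, observe that by the very definition of $z=\eta\,\nabla u^{m-1}-2\,x$ and of the stationary profile $u_\infty$ — which satisfies $\eta\,\nabla u_\infty^{m-1}=2\,x$, i.e. $z$ evaluated at $u_\infty$ vanishes — one has $z=\eta\,\bigl(\nabla u^{m-1}-\nabla u_\infty^{m-1}\bigr)$, so that
\[
\ird{u\,|z|^2}=\eta^2\ird{u\,\bigl|\nabla u^{m-1}-\nabla u_\infty^{m-1}\bigr|^2}=\eta\,\frac{1-m}m\,\mathcal I[u|u_\infty]\;.
\]
This is the identity already flagged in the excerpt ($\tfrac{1-m}m\,\eta\,\mathcal I[u|u_\infty]=\ird{u\,|z|^2}$), and it converts the left-hand side of \eqref{Eqn:BakryEmery}, namely $\frac d{dt}\ird{u|z|^2}+4\ird{u|z|^2}$, into $\eta\,\frac{1-m}m\bigl(\frac d{dt}\mathcal I[u|u_\infty]+4\,\mathcal I[u|u_\infty]\bigr)$.

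Next I would handle the right-hand side. Equation \eqref{Eqn:BakryEmery} says this equals $-2\,\eta\,\frac{1-m}m\ird{u^m\bigl(|\nabla z|^2-(1-m)(\nabla\cdot z)^2\bigr)}$. The arithmetic–geometric (Cauchy–Schwarz) inequality applied to the diagonal of $\nabla\otimes z$ gives $(\nabla\cdot z)^2=\bigl(\sum_i \partial_i z^i\bigr)^2\le d\sum_i(\partial_i z^i)^2\le d\,|\nabla z|^2$, hence $|\nabla z|^2-(1-m)(\nabla\cdot z)^2\ge\bigl(1-d(1-m)\bigr)|\nabla z|^2\ge0$ precisely when $1-m\le 1/d$, i.e. $m\ge m_1=\frac{d-1}d$, which holds under the hypothesis $m\in(m_1,1)$. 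Therefore the right-hand side of \eqref{Eqn:BakryEmery} is $\le 0$, and dropping it we obtain $\eta\,\frac{1-m}m\bigl(\frac d{dt}\mathcal I[u|u_\infty]+4\,\mathcal I[u|u_\infty]\bigr)\le 0$. Since $\eta>0$ and $0<m<1$ make the prefactor $\eta\,\frac{1-m}m$ strictly positive, dividing through yields $\frac d{dt}\,\mathcal I[u(\cdot,t)|u_\infty]\le-4\,\mathcal I[u(\cdot,t)|u_\infty]$ for all $t>0$, which is the claim.

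The genuinely delicate point — and the reason the proposition is stated as having been "formally established" — is not the algebra but the justification of the integrations by parts performed in deriving \eqref{Eqn:BakryEmery}: one must know that $u(\cdot,t)$ is smooth and positive for $t>0$, with enough decay (and enough decay of $\nabla u^{m-1}$, $\nabla\otimes\nabla u^{m-1}$, and of $u|z|^2$, $u^m|\nabla z|^2$) at spatial infinity for all the boundary terms to vanish, and that $t\mapsto\mathcal I[u(\cdot,t)|u_\infty]$ is differentiable. This is exactly where the hypotheses $u_0\in\L^1(\R^d)$, $u_0^m\in\L^1(\R^d)$ and $|x|^2\,u_0\in\L^1(\R^d)$ enter: they guarantee, by the regularity theory of the fast diffusion equation in the range $m\in(m_1,1)$ and by propagation of the second moment along the flow \eqref{Eqn2}, that the solution lies in a function class where all these manipulations are licit. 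I would invoke this regularity/decay theory (as is standard in the entropy–entropy production literature cited in the excerpt) to upgrade the formal computation to a rigorous one, rather than re-proving it; modulo that input, the argument above is complete. \finprf
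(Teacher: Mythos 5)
Your argument is correct and coincides with the paper's own proof: the paper likewise derives identity \eqref{Eqn:BakryEmery}, notes that $\tfrac{1-m}m\,\eta\,\mathcal I[u|u_\infty]=\ird{u\,|z|^2}$, discards the right-hand side via the arithmetic--geometric inequality under $m\ge m_1$, and defers the justification of the integrations by parts to the cited regularity literature. No changes needed.
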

The proof of such a result requires to justify that all integrations by parts make sense. We refer to \cite{MR1777035,MR1986060} for a proof in the porous medium case ($m>1$) and to \cite{MR1853037} for $m_1\le m<1$. The case $m=1$ was covered long ago in \cite{MR772092}.

\begin{remark}\label{Rem:BEmethod} Proposition~\ref{Thm:BE} provides a proof of~\eqref{Ineq:E-EP}. Indeed, with a Gronwall estimate, we first get that
\[
\mathcal I[u(\cdot,t)|u_\infty]\le \mathcal I[u_0|u_\infty]\,e^{-\,4\,t}\quad \forall\;t\ge0
\]
if $\mathcal I[u_0|u_\infty]$ is finite. Since $\mathcal I[u(\cdot,t)|u_\infty]$ is non-negative, we know that
\[
\lim_{t\to\infty}\mathcal I[u(\cdot,t)|u_\infty]=0\;,
\]
which proves the convergence of $u(\cdot,t)$ to $u_\infty$ as $t\to\infty$. As a consequence, we also have $\lim_{t\to\infty}\mathcal F[u(\cdot,t)|u_\infty]=0$ and since
\[
\frac d{dt}\big(\mathcal I[u(\cdot,t)|u_\infty]-4\,\mathcal F[u(\cdot,t)|u_\infty]\big)=\frac d{dt}\,\mathcal I[u(\cdot,t)|u_\infty]+\,4\,\mathcal I[u(\cdot,t)|u_\infty]\le0\;,
\]
an integration with respect to $t$ on $(0,\infty)$ shows that
\[
\mathcal I[u_0|u_\infty]-4\,\mathcal F [u_0|u_\infty]\ge0\;,
\]
which is precisely \eqref{Ineq:E-EP} written for $u=u_0$. \end{remark}

\section{A Csisz\'ar-Kullback inequality}\label{Sec:CK}

Let $m\in(\widetilde m_1,1)$ with $\widetilde m_1=\frac d{d+2}$ and consider the relative entropy
\[
\mathcal F_\sigma[u]:=\frac 1{m-1}\int_{\R^d}\left[u^m-B_\sigma^m-m\,B_\sigma^{m-1}\,(u-B_\sigma)\right]\,dx
\]
for some Barenblatt function
\be{Eqn:Barenblatt}
B_\sigma(x):=\sigma^{-\frac d2}\(C_M+\tfrac 1\sigma\,|x|^2\)^\frac 1{m-1}\quad\forall\;x\in\R^d
\ee
where $\sigma$ is a positive constant and $C_M$ is chosen such that $\nrm{B_\sigma}1=M>0$. With $p$ and $m$ related by~\eqref{Eqn:pm}, the definition of $C_M$ coincides with the one of Section~\ref{Sec:Intro}. See details in Appendix~\ref{Sec:Appendix}.
\begin{theorem}\label{Thm:CK} Let $d\ge 1$, $m\in(\widetilde m_1,1)$ and assume that $u$ is a non-negative function in $\L^1(\R^d)$ such that $u^m$ and $x\mapsto |x|^2\,u$ are both integrable on $\R^d$. If $\nrm u1=M$ and $\ird{|x|^2\,u}=\ird{|x|^2\,B_\sigma}$, then
\[
\frac{\mathcal F_\sigma[u]}{\sigma^{\frac d2(1-m)}}\ge \frac m{8\ird{B_1^m}}\(C_M\nrm{u-B_\sigma}1+\frac 1\sigma\,\ird{|x|^2\,|u-B_\sigma|}\)^2\,.
\]
\end{theorem}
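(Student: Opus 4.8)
\textbf{Proof plan for Theorem~\ref{Thm:CK}.}

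The plan is to follow the classical Csisz\'ar--Kullback strategy of Taylor-expanding the entropy integrand around the Barenblatt profile $B_\sigma$, but in a way that is careful about the two regions $\{u\le B_\sigma\}$ and $\{u\ge B_\sigma\}$, and then exploit the second-moment matching constraint to upgrade a weighted $\L^1$ control into the stated combination of $\|u-B_\sigma\|_1$ and $\||x|^2(u-B_\sigma)\|_1$. First I would reduce to $\sigma=1$: writing $u_\sigma(x):=\sigma^{d/2}u(\sqrt\sigma\,x)$, one checks that $\mathcal F_\sigma[u]=\sigma^{\frac d2(1-m)}\mathcal F_1[u_\sigma]$, that $\|u_\sigma\|_1=M$, that $B_\sigma$ rescales to $B_1$, and that the second-moment condition $\ird{|x|^2u}=\ird{|x|^2B_\sigma}$ becomes $\ird{|x|^2u_\sigma}=\ird{|x|^2B_1}$; moreover $C_M\|u-B_\sigma\|_1+\frac1\sigma\||x|^2(u-B_\sigma)\|_1 = C_M\|u_\sigma-B_1\|_1+\||x|^2(u_\sigma-B_1)\|_1$. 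So it suffices to prove the inequality for $\sigma=1$.

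Next, set $\phi(s):=\frac1{m-1}\big(s^m - 1 - m(s-1)\big)$ for $s\ge0$, so that $\mathcal F_1[u]=\ird{B_1^m\,\phi(u/B_1)}$. The function $\phi$ is non-negative, strictly convex, vanishes only at $s=1$, with $\phi''(s)=m\,s^{m-2}$. The key pointwise estimate is a lower bound of the form $\phi(s)\ge c\,(s-1)^2/\max\{1,s\}^{2-m}$ or, more usefully after multiplying by $B_1^m$, $B_1^m\,\phi(u/B_1)\ge c\,(u-B_1)^2/\max\{u,B_1\}^{2-m}$; I would prove this by comparing with the integral remainder $\phi(s)=\int_1^s (s-t)\,\phi''(t)\,dt$ and bounding $\phi''(t)=m\,t^{m-2}$ from below on the interval between $1$ and $s$ (the bound on $t^{m-2}$ uses $m<1$, i.e. $m-2<-1$, so $t^{m-2}\ge \max\{1,s\}^{m-2}$ there). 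Then by Cauchy--Schwarz,
\begin{multline*}
\ird{|u-B_1|\,w}\le\(\ird{\frac{(u-B_1)^2}{\max\{u,B_1\}^{2-m}}}\)^{\!1/2}\(\ird{\max\{u,B_1\}^{2-m}\,w^2}\)^{\!1/2}\\
\le\(c^{-1}\,\mathcal F_1[u]\)^{1/2}\(\ird{(u+B_1)^{2-m}\,w^2}\)^{1/2}
\end{multline*}
for a suitable nonnegative weight $w$. Choosing $w=C_M+|x|^2$ — precisely the combination appearing in $B_1^{m-1}=C_M+|x|^2$ up to the power — makes the second factor controllable: since $2-m<1$ and $(u+B_1)^{2-m}(C_M+|x|^2)^2 \le (u+B_1)(C_M+|x|^2)^2 \cdot (\text{bounded})$ is not quite right, I would instead use $(u+B_1)^{2-m}\le (u+B_1)\,\|u+B_1\|_\infty^{1-m}$ only where that helps, or better, note $(C_M+|x|^2)^2 B_1^m = B_1^{m}B_1^{2(m-1)}$... the cleanest route is: bound $\ird{(u+B_1)^{2-m}(C_M+|x|^2)^2}$ by interpolating, using that $B_1(C_M+|x|^2)= B_1^{2-m}$ times a constant and $\ird{u\,(C_M+|x|^2)}=\ird{B_1(C_M+|x|^2)}$ by the mass and second-moment constraints, hence equals $2\,\ird{B_1^{2-m}}= \tfrac{2}{m}\ird{B_1^m}\cdot(\ldots)$; a direct computation identifies $\ird{B_1^{2-m}}$ with a multiple of $\ird{B_1^m}$ because $B_1^{2-m}=(C_M+|x|^2)B_1\cdot B_1^{1-m}\cdot$...

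\emph{Main obstacle.} The genuinely delicate point is the combined use of the mass constraint \emph{and} the second-moment constraint to close the estimate with the specific weight $w=C_M+|x|^2$ and to pin down the constant $m/\big(8\ird{B_1^m}\big)$ exactly. A naive Cauchy--Schwarz produces a weight-squared integral $\ird{(u+B_1)^{2-m}(C_M+|x|^2)^2}$ that is not obviously finite for general admissible $u$, so one must avoid squaring the full weight: the fix is to split the left-hand target $C_M\|u-B_1\|_1+\||x|^2(u-B_1)\|_1=\ird{|u-B_1|(C_M+|x|^2)}$ and apply Cauchy--Schwarz with the \emph{asymmetric} pairing $|u-B_1|\cdot(C_M+|x|^2)= \big(|u-B_1|\,(u+B_1)^{-(2-m)/2}\big)\cdot\big((u+B_1)^{(2-m)/2}(C_M+|x|^2)\big)$, and then bound $\ird{(u+B_1)^{2-m}(C_M+|x|^2)^2}$ using $(u+B_1)^{2-m}\le (u+B_1)\cdot\big(C_M+|x|^2\big)^{(m-1)\cdot(?)}$... ultimately one uses that on the support $B_1^{1-m}=C_M+|x|^2$, so $(C_M+|x|^2)^2 B_1^{2-m}=B_1^{2-m}\,B_1^{2(1-m)}=B_1^{4-3m}$, which is integrable iff $m>\widetilde m_1=d/(d+2)$ — exactly the hypothesis. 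I expect the constant $8$ to emerge from combining the factor $2$ in $\phi''(t)=m t^{m-2}$ giving $\phi(s)\ge \tfrac{m}{2}\,(s-1)^2\max\{1,s\}^{m-2}$, a factor $2$ from $\ird{u(C_M+|x|^2)}+\ird{B_1(C_M+|x|^2)}=2\ird{B_1(C_M+|x|^2)}$, and a factor $2$ from $\max\{u,B_1\}\le u+B_1$, for a total of $2^3=8$; tracking these carefully, together with the identity $\ird{B_1(C_M+|x|^2)}=\ird{B_1^{2-m}}$ and its relation to $\ird{B_1^m}$, should yield precisely the asserted bound.

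\finprf
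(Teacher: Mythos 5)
Your reduction to $\sigma=1$ and the Taylor expansion of $\phi(s)=\frac1{m-1}\big(s^m-1-m(s-1)\big)$ are sound, and the pointwise bound $B_1^m\,\phi(u/B_1)\ge\frac m2\,(u-B_1)^2\max\{u,B_1\}^{m-2}$ is correct. But the argument breaks exactly at the point you flag as the main obstacle, and none of the fixes you sketch closes it. Applying Cauchy--Schwarz on all of $\R^d$ forces you to bound $\ird{\max\{u,B_1\}^{2-m}\,(C_M+|x|^2)^2}$. On $\{u\le B_1\}$ this is harmless, since $B_1^{2-m}(C_M+|x|^2)^2=B_1^{2-m}B_1^{2(m-1)}=B_1^m$; but on $\{u>B_1\}$ it contains $u^{2-m}(C_M+|x|^2)^2$ with $2-m>1$, which is \emph{not} controlled by the hypotheses: integrability of $u$, $u^m$ and $|x|^2u$ constrains tails and small exponents, not superlinear powers, so this integral can be infinite (a tall thin spike in $u$ already defeats it). The identity $\ird{u\,(C_M+|x|^2)}=\ird{B_1(C_M+|x|^2)}$ only handles the first power of $u$, and the interpolations you gesture at do not produce the needed bound. (Two smaller slips: the integral whose finiteness is equivalent to $m>\widetilde m_1$ is $\ird{B_1^m}$, not $\ird{B_1^{4-3m}}$; and your accounting of the factor $8$ is not the right one.)

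The missing idea is to use the moment constraints \emph{before} Cauchy--Schwarz so as never to leave the set $\{u<B_\sigma\}$. Since $B_\sigma^{m-1}$ is an affine function of $|x|^2$, matching mass and second moment gives the orthogonality $\ird{B_\sigma^{m-1}(u-B_\sigma)}=0$, i.e.\ $\int_{\R^d}(v-1)\,d\mu_\sigma=0$ with $v=u/B_\sigma$ and $d\mu_\sigma=B_\sigma^m\,dx$. Hence $\int_{\R^d}|v-1|\,d\mu_\sigma=2\int_{\{v<1\}}|v-1|\,d\mu_\sigma$, and it suffices to estimate the integral over $\{v<1\}$, where the Taylor remainder $\xi^{m-2}$ is $\ge1$ (no $\max\{1,s\}^{m-2}$ penalty) and where Cauchy--Schwarz against $d\mu_\sigma$ produces exactly the finite constant $\ird{B_\sigma^m}$. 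This yields $\mathcal F_\sigma[u]\ge\frac m8\,\big(\ird{|u-B_\sigma|\,B_\sigma^{m-1}}\big)^2/\ird{B_\sigma^m}$, and since $C_M+\frac1\sigma|x|^2=\sigma^{-\frac d2(1-m)}B_\sigma^{m-1}$ the weight in the statement comes for free, with no separate pairing argument; the factor $8$ is the Taylor $2$ times the square of the doubling factor $2$. You had all the ingredients (you even noticed that $C_M+|x|^2$ is $B_1^{m-1}$), but without the orthogonality-and-restriction step the estimate cannot be closed along your route.
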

Notice that the condition $\ird{|x|^2\,u}=\ird{|x|^2\,B_\sigma}$ is explicit and determines $\sigma$ uniquely:
\[
\sigma=\frac 1{K_M}\ird{|x|^2\,u}\quad\mbox{with}\quad K_M:=\ird{|x|^2\,B_1}\;.
\]
For further details, see Lemma~\ref{Lem:ChoiceOfSigma} and \eqref{IPP} below, and Appendix~\ref{Sec:Appendix} for detailed expressions of $K_M$ and $\ird{B_1^m}$. With this choice of $\sigma$, since $B_\sigma^{m-1}=\sigma^{\frac d2(1-m)}\,C_M+\sigma^{\frac d2(m_c-m)}\,|x|^2$, we remark that $\ird{B_\sigma^{m-1}\,(u-B_\sigma)}=0$ so that the relative entropy reduces to
\[
\mathcal F_\sigma[u]:=\frac 1{m-1}\int_{\R^d}\left[u^m-B_\sigma^m\right]\,dx
\]

\begin{proof}[Proof of Theorem~\ref{Thm:CK}] Let $v:=u/B_\sigma$ and $d\mu_\sigma:=B_\sigma^m\,dx$. With these notations, we observe that
\begin{multline*}
\int_{\R^d}(v-1)\;d\mu_\sigma=\ird{B_\sigma^{m-1}\,(u-B_\sigma)}\\
=\sigma^{\frac d2\,(1-m)}\,C_M\ird{(u-B_\sigma)}+\sigma^{\frac d2(m_c-m)}\,\ird{|x|^2\,(u-B_\sigma)}=0\;.
\end{multline*}
Thus
\[
\int_{\R^d}(v-1)\;d\mu_\sigma=\int_{v>1}(v-1)\;d\mu_\sigma-\int_{v<1}(1- v)\;d\mu_\sigma=0\;,
\]
which, coupled with
\[
\int_{v>1}(v-1)\;d\mu_\sigma+\int_{v<1}(1- v)\;d\mu_\sigma=\int_{\R^d}|v-1|\;d\mu_\sigma\;,
\]
implies
\[
\ird{|u-B_\sigma|\,B_\sigma^{m-1}}=\int_{\R^d}|v-1|\;d\mu_\sigma=2\int_{v<1}\kern-1pt|v-1|\;d\mu_\sigma\;.
\]

On the other hand, a Taylor expansion shows that
\[
\mathcal F_\sigma[u]=\frac 1{m-1}\int_{\R^d}\big[v^m-1-m\,(v-1)\big]\,d\mu_\sigma=\frac m2\int_{\R^d}\xi^{m-2}\,|v-1|^2\,d\mu_\sigma
\]
for some function $\xi$ taking values in the interval $\(\min\{1,v\},\max\{1,v\}\)$, thus giving the lower bound
\[
\mathcal F_\sigma[u]\ge\frac m2\int_{v<1}\kern-1pt\xi^{m-2}\,|v-1|^2\,d\mu_\sigma\ge\frac m2\int_{v<1}\kern-1pt|v-1|^2\,d\mu_\sigma\;.
\]
Using the Cauchy-Schwarz inequality, we get
\[
\(\int_{v<1}\kern-1pt|v-1|\;d\mu_\sigma\)^2=\(\int_{v<1}\kern-1pt|v-1|\,B_\sigma^\frac m2\, B_\sigma^\frac m2\,dx\)^2\le\int_{v<1}\kern-1pt|v-1|^2\,d\mu_\sigma\ird{B_\sigma^m}
\]
and finally obtain that
\[
\mathcal F_\sigma[u]\ge\frac m2\,\frac{\(\int_{v<1}\kern-1pt|v-1|\;d\mu_\sigma\)^2}{\ird{B_\sigma^m}}=\frac m8\,\frac{\(\ird{|u-B_\sigma|\,B_\sigma^{m-1}}\)^2}{\ird{B_\sigma^m}}\;,
\]
which concludes the proof.
\end{proof}

Notice that the inequality of Theorem~\ref{Thm:CK} can be rewritten in terms of $|f|^{2\,p}=u$ and $g^{2\,p}=B_\sigma$ with $p=1/(2\,m-1)$. See Appendix~\ref{Sec:Appendix} for the computation of $\ird{B_\sigma^m}$, $\sigma$, $C_M$ and $K_M$ in terms of $\ird{|x|^2\,u}$ and $M_*$. In the framework of Corollary~\ref{Cor:GN}, we observe that Condition~\eqref{Normalization} can be rephrased~as
\be{Sigma*}
\sigma=\frac 1{K_M}\nrm f{2,2\,p}^{2\,p}=\frac 1{K_1}\frac{\nrm f{2,2\,p}^{2\,p}}{\nrm f{2\,p}^{2\,p\,\gamma}}=\sigma_*\;.
\ee
Altogether we find in such a case that
\[
\mathcal R^{(p)}[f]=\tfrac{p-1}{p+1}\,\mathcal F_{\sigma_*}[u]\ge\mathsf C_{\rm CK}\,\nrm{\,|f|^{2\,p}-|g|^{2\,p}\,}1^2
\]
with
\be{CK}
\mathsf C_{\rm CK}=\tfrac{p-1}{p+1}\,\tfrac{d+2-p\,(d-2)}{32\,p}\,\sigma_*^{d\,\frac{p-1}{4\,p}}\,M_*^{1-\gamma}\,.
\ee

\begin{remark} Various other estimates can be derived, based on second order Taylor expansions. For instance, as in \cite{MR1940370}, we can write that
\[
\mathcal F_\sigma[u]=\int_{\R^d}\big[\psi(v^m)-\psi(1)-\psi'(1)\,(v^m-1)\big]\,d\mu_\sigma
\]
with $v:=u/B_\sigma$ and $\psi(s):=\frac m{1-m}\,s^{1/m}$, and get
\[
\mathcal F_\sigma[u]\ge\frac 1m\,2^{-2\,m}\,\frac{\|v^m-1\|_{\L^{1/m}(\R^d,d\mu_\sigma)}^2}{\max\big\{\|v^m\|_{\L^{1/m}(\R^d,d\mu_\sigma)},\|1\|_{\L^{1/m}(\R^d,d\mu_\sigma)}\big\}^{2-\frac 1m}}\;.
\]
Using $\|v^m\|_{\L^{1/m}(\R^d,d\mu_\sigma)}=\|1\|_{\L^{1/m}(\R^d,d\mu_\sigma)}=\nrm{B_\sigma^m}1^m$ and
\begin{multline*}
\ird{|u^m-B_\sigma^m|}=\ird{|u^m-B_\sigma^m|\,B_\sigma^{m\,(m-1)}\,B_\sigma^{m\,(1-m)}}\\
\le\|v^m-1\|_{\L^{1/m}(\R^d,d\mu_\sigma)}\,\nrm{B_\sigma^m}1^{1-m}
\end{multline*}
by the Cauchy-Schwarz inequality, we find
\[
\mathcal F_\sigma[u]\ge\frac{\nrm{u^m-B_\sigma^m}1^2}{m\,2^{\,2\,m}\,\nrm{B_\sigma^m}1}\;.
\]
With $f=u^{m-\frac 12}$, this also gives another estimate of Csisz\'ar-Kullback type, namely
\[
\mathcal R^{(p)}[f]\ge\frac{\kappa_{p,d}}{\nrm f{2,2\,p}^{\frac d2\,(p-1)}\,\nrm f{2\,p}^{\frac 12\,(d+2-p\,(d-2))}}\,\inf_{g\in\mathfrak M_d^{(p)}}\nrm{|f|^{p+1}-g^{p+1}}1^2\;,
\]
for some positive constant $\kappa_{p,d}$, which is valid for any $p\in(1,\infty)$ if $d=2$ and any $p\in(1,\frac d{d-2}]$ if $d\ge3$. Also see \cite{MR1801751,MR1777035,MR1951784,DoKa06} for further results on Csisz\'ar-Kullback type inequalities corresponding to entropies associated with porous media and fast diffusion equations. \end{remark}

\section{Recent results on the optimal matching by Barenblatt solutions}\label{Sec:Matching}

Consider on $\R^d$ the fast diffusion equation with harmonic confining potential given by
\be{Eqn2bis}
\frac{\partial u}{\partial t}+\nabla\cdot\left[u\,\(\sigma^{\frac d2(m-m_c)} \,\nabla u^{m-1}-2\,x\)\right]=0\quad t>0\;,\quad x\in\R^d\,,
\ee
with initial datum $u_0$. Here $\sigma$ is a function of $t$. Let us summarize some results obtained in \cite{1004} and the strategy of their proofs.

\medskip\noindent\emph{Result 1.} At any time $t>0$, we can choose the \emph{best matching Barenblatt} as follows. Consider a given function $u$ and optimize $\lambda\mapsto\mathcal F_\lambda[u]$.
\begin{lemma}\label{Lem:ChoiceOfSigma} For any given $u\in\L^1_+(\R^d)$ such that $u^m$ and $|x|^2\,u$ are both integrable, if $m\in(\widetilde m_1,1)$, there is a unique $\lambda=\lambda^*>0$ which minimizes $\lambda\mapsto\mathcal F_\lambda[u]$, and it is explicitly given by
\[\label{min}
{\lambda^*}=\frac 1{K_M}\,\ird{|x|^2\,u}
\]
where $K_M=\ird{|x|^2\,B_1}$. For $\lambda=\lambda^*$, the Barenblatt profile $B_\lambda$ satisfies
\[\label{equal-moment}
\ird{|x|^2\,B_\lambda}=\ird{|x|^2\,u}\;.
\]
\end{lemma}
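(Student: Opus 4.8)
The plan is to reduce the minimization of $\lambda\mapsto\mathcal F_\lambda[u]$ to a single scalar variational problem and solve it by elementary calculus. First I would note that, for fixed $M=\nrm u1$, the Barenblatt profile $B_\lambda$ depends on $\lambda$ in a completely explicit way: by \eqref{Eqn:Barenblatt}, $B_\lambda^{m-1}=\lambda^{\frac d2(1-m)}\,C_M+\lambda^{\frac d2(m_c-m)}\,|x|^2$, so the only part of $\mathcal F_\lambda[u]$ that depends on $\lambda$ comes through the two ``reference'' terms $B_\lambda^m$ and $m\,B_\lambda^{m-1}(u-B_\lambda)$. Writing
\[
\mathcal F_\lambda[u]=\frac1{m-1}\ird{u^m}-\frac1{m-1}\ird{B_\lambda^m}-\frac m{m-1}\ird{B_\lambda^{m-1}(u-B_\lambda)},
\]
the first term is a constant independent of $\lambda$, and the remaining two can be expanded using the explicit formula for $B_\lambda^{m-1}$. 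Since $\ird{B_\lambda}=M$ for every $\lambda$ (this is how $C_M$ is fixed), several contributions collapse. After the algebra one is left with an expression of the form
\[
\mathcal F_\lambda[u]=\text{const}+a\,\lambda^{\frac d2(1-m)}+b\,\lambda^{\frac d2(m_c-m)},
\]
where $a$ and $b$ are positive constants: $a$ involves $M$ and $C_M$, and $b$ is proportional to $\ird{|x|^2\,u}$, up to an explicit comparison with the corresponding moment of $B_\lambda$. The bookkeeping here — keeping track of the powers $\frac d2(1-m)$ and $\frac d2(m_c-m)=\frac d2(1-m)-1$, and of the $\sigma$-scaling relations between $B_1$ and $B_\lambda$ collected in the Appendix — is the step I expect to be the main nuisance, though it is not conceptually hard.

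With $\mathcal F_\lambda[u]$ in this two-term form, I would differentiate in $\lambda$. Because the two exponents differ by exactly $1$, the stationarity condition $\frac{d}{d\lambda}\mathcal F_\lambda[u]=0$ becomes a linear equation in $\lambda$, yielding a unique positive root. Solving it and simplifying with the definition $K_M=\ird{|x|^2\,B_1}$ and the scaling $\ird{|x|^2\,B_\lambda}=\lambda^{1+\frac d2(m-m_c)}\,\lambda^{?}\cdots$ (again, the precise scaling exponent is read off from $B_\lambda(x)=\lambda^{-d/2}B_1(x/\sqrt\lambda)$ up to the $C_M$ normalization) should collapse to $\lambda^*=\frac1{K_M}\ird{|x|^2\,u}$. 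Uniqueness and the fact that $\lambda^*$ is a minimizer, not a maximizer or inflection point, follow from the strict convexity of $\lambda\mapsto a\,\lambda^{\alpha}+b\,\lambda^{\alpha-1}$ on $(0,\infty)$ for $\alpha=\frac d2(1-m)>0$ when $m\in(\widetilde m_1,1)$ (note $\widetilde m_1<1$ guarantees $\alpha>0$, and one checks the second derivative is strictly positive); together with the coercivity $\mathcal F_\lambda[u]\to+\infty$ as $\lambda\to0^+$ and as $\lambda\to+\infty$, this pins down the global minimizer.

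Finally, for the second assertion, I would simply substitute $\lambda=\lambda^*$ back. The identity $\ird{|x|^2\,B_\lambda}=\ird{|x|^2\,u}$ at $\lambda=\lambda^*$ is in fact equivalent to the stationarity equation: differentiating $\mathcal F_\lambda[u]$ in $\lambda$ produces, after the dust settles, a multiple of $\bigl(\ird{|x|^2\,B_\lambda}-\ird{|x|^2\,u}\bigr)$, so the critical point is exactly characterized by the matching of second moments. Alternatively, once $\lambda^*=\frac1{K_M}\ird{|x|^2\,u}$ is known, one checks directly from the scaling $\ird{|x|^2\,B_\lambda}=\lambda\,\ird{|x|^2\,B_1}=\lambda\,K_M$ that $\ird{|x|^2\,B_{\lambda^*}}=K_M\cdot\frac1{K_M}\ird{|x|^2\,u}=\ird{|x|^2\,u}$. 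I would present the proof via the first route — write $\mathcal F_\lambda[u]$ explicitly, differentiate, solve, and read off both conclusions at once — since it makes transparent that the equal-moment condition and the optimal-matching condition coincide, which is the conceptual point the paper wants to exploit in later sections.
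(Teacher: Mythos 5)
Your route---reduce $\mathcal F_\lambda[u]$ to an explicit two-term power function of $\lambda$, differentiate, and read off both the optimizer and the equal-moment identity from the same stationarity equation---is exactly the standard argument; the paper itself defers the proof to \cite{1004} and only records the first-order condition elsewhere (the computation of $\partial h/\partial\sigma$ in Section~\ref{Sec:Proofs}), and your computation reproduces it. Concretely, since $\ird{B_\lambda}=M=\ird u$, $\ird{|x|^2\,B_\lambda}=\lambda\,K_M$ and $\ird{B_\lambda^m}=\lambda^{\frac d2(1-m)}\ird{B_1^m}$, one gets
\[
\mathcal F_\lambda[u]=\mathrm{const}+a\,\lambda^{\alpha}+b\,\lambda^{\alpha-1},\qquad
\alpha=\tfrac d2\,(1-m),\quad
a=\tfrac{m\,(m-m_c)}{(1-m)^2}\,K_M,\quad
b=\tfrac m{1-m}\ird{|x|^2\,u},
\]
so that $\frac d{d\lambda}\mathcal F_\lambda[u]=\lambda^{\alpha-2}\big(a\,\alpha\,\lambda+b\,(\alpha-1)\big)$ is indeed a constant multiple of $\lambda^{\alpha-2}\big(\ird{|x|^2\,B_\lambda}-\ird{|x|^2\,u}\big)$ and vanishes exactly at $\lambda^*=\frac{b\,(1-\alpha)}{a\,\alpha}=\frac1{K_M}\ird{|x|^2\,u}$.

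Two corrections are needed, both local. First, your convexity claim is false: for $m\in(\widetilde m_1,1)$ one has $0<\alpha<1$, so $\lambda\mapsto a\,\lambda^{\alpha}$ is strictly \emph{concave} and the sum $a\,\lambda^{\alpha}+b\,\lambda^{\alpha-1}$ is not convex on all of $(0,\infty)$ (its second derivative changes sign). You do not need convexity: the derivative factors as above, so there is exactly one critical point on $(0,\infty)$ provided $\alpha<1$, i.e.\ $m>m_c$ (which makes $b\,(\alpha-1)<0$), and coercivity ($\mathcal F_\lambda[u]\to+\infty$ as $\lambda\to0^+$ because $\alpha-1<0$, and as $\lambda\to+\infty$ because $a,\alpha>0$) forces this critical point to be the global minimum. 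Second, the hypothesis $m>\widetilde m_1=\frac d{d+2}$ is not about making $\alpha>0$ (any $m<1$ does that); it is precisely the condition ensuring that $\ird{B_1^m}$ and $K_M=\ird{|x|^2\,B_1}$ are finite, i.e.\ that the two-term expression above is meaningful, and it implies $m>m_c$ as required. With these two points repaired, the proof is complete and coincides with the intended one.
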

\noindent As a consequence, we know that
\[
\frac d{d\lambda}\big(\mathcal F_\lambda[u]\big)_{\lambda=\lambda_*}=0\;.
\]
Of course, if $u$ is a solution of \eqref{Eqn2bis}, the value of $\lambda$ in Lemma~\ref{Lem:ChoiceOfSigma} may depend on~$t$. Now we choose $\sigma(t)=\lambda(t)$, \emph{i.e.,}
\be{Eqn:sigma}
\sigma(t)=\frac 1{K_M}\,\ird{|x|^2\,u(x,t)}\quad\forall\;t\ge 0\;.
\ee
This makes \eqref{Eqn2bis} a non-local equation.

\medskip\noindent\emph{Result 2.} With the above choice, if we consider a solution of~\eqref{Eqn2bis} and compute the time derivative of the relative entropy, we find that
\[\label{Eqn:TwoTerms}
\frac d{dt}\,\mathcal F_{\sigma(t)}[u(\cdot,t)]=\sigma'(t)\(\frac d{d\sigma}\,\mathcal F_\sigma[u]\)_{|\sigma=\sigma(t)}\kern -5pt+\frac m{m-1}\ird{\(u^{m-1}-B_{\sigma(t)}^{m-1}\)\frac{\partial u}{\partial t}}\;.
\]
However, as a consequence of the choice~\eqref{Eqn:sigma} and of Lemma~\ref{Lem:ChoiceOfSigma}, we know that
\[
\(\frac d{d\sigma}\,\mathcal F_\sigma[u]\)_{|\sigma=\sigma(t)}=0\;,
\]
and finally obtain
\be{Eqn:Standard}
\frac d{dt}\,\mathcal F_{\sigma(t)}[u(\cdot,t)]=-\frac{m\,\sigma(t)^{\frac d2(m-m_c)}} {1-m}\ird{u\left|\nabla\left[u^{m-1}-B_{\sigma(t)}^{m-1}\right]\right|^2}\;.
\ee
The computation then goes as in \cite{BBDGV,BDGV} (also see \cite{MR760591,MR760592,MR1940370} for details). With our choice of $\sigma$, we gain an additional orthogonality condition which is useful for improving the rates of convergence (see~\cite [Theorem~1]{1004}) in the asymptotic regime $t\to\infty$, compared to the results of \cite{BDGV} (also see below).

\medskip\noindent\emph{Result 3.} Now let us state one more result of \cite{1004} which is of interest for the present paper.
\begin{lemma}\label{Lem:Asymptotic} With the above notations, if $u$ and $\sigma$ are defined respectively by~\eqref{Eqn2bis} and \eqref{Eqn:sigma}, then the function $t\mapsto\sigma(t)$ is positive, decreasing, with $\sigma_\infty:=\lim_{t\to\infty}\sigma(t)>0$ and
\be{ODE:sigma}
\sigma'(t)=-2\,d\,\frac{(1-m)^2}{m\,K_M}\,\sigma^{\frac d2(m-m_c)}\,\mathcal F_{\sigma(t)}[u(\cdot,t)]\le 0\;.
\ee
\end{lemma}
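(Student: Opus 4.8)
\textbf{Proof plan for Lemma~\ref{Lem:Asymptotic}.}

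The plan is to start from the exact identity \eqref{Eqn:Standard} and substitute the specific choice of $\sigma(t)$ given by \eqref{Eqn:sigma}, which relates $\sigma$ directly to the second moment of $u$. First I would differentiate \eqref{Eqn:sigma} in $t$, using the continuity equation \eqref{Eqn2bis} to express $\frac{d}{dt}\ird{|x|^2\,u}$ in terms of the flux: an integration by parts gives $\frac{d}{dt}\ird{|x|^2\,u}=-\,2\ird{x\cdot\bigl(u\,(\sigma^{\frac d2(m-m_c)}\nabla u^{m-1}-2\,x)\bigr)}$, and the term $-2\ird{x\cdot u\,\sigma^{\frac d2(m-m_c)}\nabla u^{m-1}}$ can be rewritten, after a further integration by parts on $\nabla u^{m-1}$, as $2\,d\,\frac{1-m}{m}\,\sigma^{\frac d2(m-m_c)}\ird{u^m}$ (using $x\cdot\nabla u^{m-1}=\frac{m-1}{m}\,x\cdot\nabla u^m$ and $\nabla\cdot x=d$). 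Collecting terms, $\frac{d}{dt}\ird{|x|^2\,u}=2\,d\,\frac{1-m}{m}\,\sigma^{\frac d2(m-m_c)}\ird{u^m}+4\ird{|x|^2\,u}$.

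The next step is to do the same computation for the Barenblatt profile: since $\ird{|x|^2\,B_\sigma}=K_M\,\sigma$ (this is how $\sigma$ was fixed, via $\ird{|x|^2\,u}=\ird{|x|^2\,B_\sigma}$ in Lemma~\ref{Lem:ChoiceOfSigma}), and since $B_\sigma$ is stationary for the corresponding local flow, the analogous identity for $B_\sigma$ reads $0 = 2\,d\,\frac{1-m}{m}\,\sigma^{\frac d2(m-m_c)}\ird{B_\sigma^m}+4\,K_M\,\sigma$, i.e.\ $-4\,K_M\,\sigma = 2\,d\,\frac{1-m}{m}\,\sigma^{\frac d2(m-m_c)}\ird{B_\sigma^m}$. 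Subtracting this from the identity for $u$ and using $K_M\,\sigma'(t)=\frac{d}{dt}\ird{|x|^2\,u}-0$ (because $\ird{|x|^2\,B_\sigma}=K_M\,\sigma$ differentiates to $K_M\,\sigma'$), I get $K_M\,\sigma'(t) = 2\,d\,\frac{1-m}{m}\,\sigma^{\frac d2(m-m_c)}\ird{(u^m-B_\sigma^m)}+4\bigl(\ird{|x|^2\,u}-K_M\,\sigma\bigr)$, and the last bracket vanishes by the moment-matching condition. Finally, recalling from the paragraph after Theorem~\ref{Thm:CK} that under the moment constraint the relative entropy collapses to $\mathcal F_\sigma[u]=\frac1{m-1}\ird{(u^m-B_\sigma^m)}$, so that $\ird{(u^m-B_\sigma^m)}=(m-1)\,\mathcal F_{\sigma(t)}[u]$, substitution yields $K_M\,\sigma'(t)=-2\,d\,\frac{(1-m)^2}{m}\,\sigma^{\frac d2(m-m_c)}\,\mathcal F_{\sigma(t)}[u(\cdot,t)]$, which is \eqref{ODE:sigma}. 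Positivity of $\sigma$ and of $\mathcal F_{\sigma}[u]$ (the latter from Lemma~\ref{Lem:ChoiceOfSigma}, where $\mathcal F_\lambda[u]\ge\mathcal F_{\lambda^*}[u]\ge 0$, or from the Csisz\'ar--Kullback bound of Theorem~\ref{Thm:CK}) then shows $\sigma'(t)\le 0$, hence $\sigma$ is decreasing; being positive and decreasing it has a limit $\sigma_\infty\ge 0$, and the strict positivity $\sigma_\infty>0$ I would quote directly from \cite{1004}, since it requires the finer asymptotic analysis there (it ultimately reflects that mass cannot escape to infinity in this subrange of $m$).

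The main obstacle is not the algebra but the \emph{justification of the integrations by parts}: one must know a priori that $t\mapsto\ird{|x|^2\,u(x,t)}$ is finite and differentiable, that the boundary terms at infinity in each integration by parts vanish (which requires sufficient decay of $u$, controlled via comparison with Barenblatt profiles), and that the time derivative can be passed under the integral sign. This regularity and decay theory is exactly what is developed in \cite{1004} (and in \cite{BBDGV,BDGV} for the relative entropy framework), so in the write-up I would carry out the formal computation above and cite those references for the rigorous justification, in the same spirit as the remark following Proposition~\ref{Thm:BE}. Likewise, the strict lower bound $\sigma_\infty>0$ is the one genuinely non-elementary input and is imported wholesale from \cite{1004}.
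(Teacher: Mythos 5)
Your derivation of \eqref{ODE:sigma} is sound and is essentially the computation that lies behind the result, which the paper itself does not reprove but imports from \cite{1004}: differentiating the second moment along the flow, exploiting the stationarity of $B_\sigma$ for the frozen-$\sigma$ equation, the moment-matching condition, and the reduction $\mathcal F_\sigma[u]=\frac1{m-1}\ird{(u^m-B_\sigma^m)}$ is exactly the right mechanism, and your caveat about justifying the integrations by parts matches the paper's own stance. There are, however, compensating sign slips in your intermediate identities. The integration by parts gives $\frac{d}{dt}\ird{|x|^2\,u}=+\,2\ird{x\cdot(u\,z)}$ with $z=\sigma^{\frac d2(m-m_c)}\nabla u^{m-1}-2\,x$, not $-2$; the correct balance is $\frac{d}{dt}\ird{|x|^2\,u}=2\,d\,\frac{1-m}{m}\,\sigma^{\frac d2(m-m_c)}\ird{u^m}-4\ird{|x|^2\,u}$, with a \emph{minus} sign on the confinement term (consistent with $\sigma'\le0$); and the Barenblatt identity reads $4\,K_M\,\sigma=2\,d\,\frac{1-m}{m}\,\sigma^{\frac d2(m-m_c)}\ird{B_\sigma^m}$, whereas the version you wrote, with $+4\,K_M\,\sigma$ added to a positive quantity and set equal to zero, is false as stated. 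The errors cancel upon subtraction, so your final ODE is correct, but the displayed intermediate formulas need fixing.

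The one substantive point is $\sigma_\infty>0$. Importing it wholesale from \cite{1004} is admissible, since the lemma is presented as a result of that paper; but the paper explicitly announces and provides a short self-contained alternative in Remark~\ref{Rem:SigmaInfty}, which you should prefer since every ingredient is already in your hands. Namely, \eqref{ODE:sigma} rewrites as $-\frac{d}{dt}\big(\sigma^{\frac d2(1-m)}\big)=\frac d2\,(1-m)\,\kappa_1\,f$ with $\kappa_1=2\,d\,(1-m)^2/(m\,K_M)$ and $f(t)=\mathcal F_{\sigma(t)}[u(\cdot,t)]$; the entropy--entropy production inequality gives $f(t)\le f_0\,e^{-4t}$, whence $\sigma_\infty^{\frac d2(1-m)}\ge\sigma_0^{\frac d2(1-m)}-\frac{d^2(1-m)^3}{4\,m\,K_M}\,f_0$; and since $u_0$ and $B_{\sigma_0}$ share mass and second moment, $f_0=\frac{2\,m\,K_M}{d\,(1-m)^2}\,\sigma_0^{\frac d2(1-m)}-\frac1{1-m}\ird{u_0^m}$, which turns the previous bound into the strictly positive lower bound $\sigma_\infty^{\frac d2(1-m)}\ge\frac d2\,(m-m_c)\,\sigma_0^{\frac d2(1-m)}+\frac{d^2(1-m)^2}{4\,m\,K_M}\ird{u_0^m}$. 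So the positivity of $\sigma_\infty$ is not a ``genuinely non-elementary input'': it follows quantitatively from the very ODE you just derived.
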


The main difficulty is to establish that $\sigma_\infty$ is positive. This can be done with an appropriate change of variables which reduces \eqref{Eqn2bis} to the case where~$\sigma$ does not depend on $t$. In \cite{1004}, a proof has been given, based on asymptotic results for the fast diffusion equation that were established in \cite{MR1940370,BBDGV,BBDGV-CRAS,BDGV}. An alternative proof will be given in Remark~\ref{Rem:SigmaInfty}, below.

\section{The scaled entropy - entropy production inequality}\label{Sec:ScaledEntropy-EP}

Consider the relative Fisher information
\[
\mathcal I_\sigma[u]:=\sigma^{\frac d2(m-m_c)}\,\frac m{1-m}\ird{u\,\left|\nabla u^{m-1}-\nabla B_\sigma^{m-1}\right|^2}\;.
\]
By applying \eqref{Ineq:E-EP} with $u_\infty=B_1$ and $\eta=1$ to $x\mapsto\sigma^{d/2}\,u(\sqrt\sigma\,x)$ and using the fact that $B_1(x)=\sigma^{d/2}\,B_\sigma(\sqrt\sigma\,x)$, we get the inequality
\[\label{Ineq:E-EPrescaled}
\mathcal F_\sigma[u]\le\frac 14\,\mathcal I_\sigma[u]\;.
\]
Now, if $\sigma$ is time-dependent as in Section~\ref{Sec:Matching}, we have the following relations.
\begin{lemma}\label{Lem:BE-rescaled} If $u$ is a solution of \eqref{Eqn2bis} with $\sigma(t)=\frac 1{K_M}\,\ird{|x|^2\,u(x,t)}$, then $\sigma$ satisfies \eqref{ODE:sigma}. Moreover, for any $t\ge0$, we have
\be{Eqn:entr0}
\frac d{dt}\,\mathcal F_{\sigma(t)}[u(\cdot,t)]=-\mathcal I_{\sigma(t)}[u(\cdot,t)]
\ee
and
\be{Eqn:prod0}
\frac d{dt}\,\mathcal I_{\sigma(t)}[u(\cdot,t)] \le-\left[ 4+\frac 12\,(m-m_c)\,(m-m_1)\,d^2\,\frac{|\sigma'(t)|}{\sigma(t)} \,\right]\mathcal I_{\sigma(t)}[u(\cdot,t)]\;.
\ee
\end{lemma}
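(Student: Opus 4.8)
The plan is to compute, for a solution $u$ of the non-local equation~\eqref{Eqn2bis} with the self-consistent choice $\sigma(t)=\frac 1{K_M}\ird{|x|^2\,u(x,t)}$, the time derivatives of $\mathcal F_{\sigma(t)}[u(\cdot,t)]$ and $\mathcal I_{\sigma(t)}[u(\cdot,t)]$, treating $\sigma$ as an additional dynamical variable. The first identity~\eqref{Eqn:entr0} is exactly \emph{Result 2} of Section~\ref{Sec:Matching}: differentiating $\mathcal F_\sigma[u]$ with respect to $t$ produces two terms, one proportional to $\sigma'(t)$ times $\partial_\sigma\mathcal F_\sigma[u]$, which vanishes by Lemma~\ref{Lem:ChoiceOfSigma} (the first-order optimality condition at $\sigma=\sigma(t)$), and one involving $\partial_t u$, which upon substituting~\eqref{Eqn2bis} and integrating by parts becomes precisely $-\mathcal I_{\sigma(t)}[u(\cdot,t)]$ as in~\eqref{Eqn:Standard}. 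That $\sigma$ satisfies~\eqref{ODE:sigma} is Lemma~\ref{Lem:Asymptotic}; alternatively one checks it directly by differentiating~\eqref{Eqn:sigma}, using~\eqref{Eqn2bis} and the moment identity $\frac{d}{dt}\ird{|x|^2\,u}=2d\,\sigma^{\frac d2(m-m_c)}\ird{u^m}-4\ird{|x|^2\,u}$, then rewriting $\ird{u^m}$ and the second moment in terms of $\mathcal F_{\sigma(t)}[u]$ via the reduced form of the entropy (recall $\ird{B_\sigma^{m-1}(u-B_\sigma)}=0$ for this choice of $\sigma$).

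The real content is the differential inequality~\eqref{Eqn:prod0} for the relative Fisher information. Here I would pass to the self-similar / rescaled variables that turn~\eqref{Eqn2bis} into the fixed-coefficient equation~\eqref{Eqn2} with $\eta=\sigma(t)^{\frac d2(m-m_c)}$ frozen — i.e. set $w(y,\tau)=\sigma(t)^{d/2}\,u(\sqrt{\sigma(t)}\,y,t)$ — and then redo the Bakry-Émery computation of Section~\ref{Sec:Bakry-Emery}, but now keeping the extra contributions coming from the time dependence of $\sigma$. Concretely, writing $z=\eta\,\nabla u^{m-1}-2x$ with $\eta=\eta(t)$, the quantity $\frac{1-m}{m}\,\eta^{-1}\mathcal I_\sigma[u]$ is (a rescaled version of) $\ird{u\,|z|^2}$, and $\frac{d}{dt}$ of it picks up, in addition to the terms already computed in~\eqref{Eqn:BakryEmery}, a term proportional to $\eta'(t)$ coming from $\partial_t z = \eta'(t)\nabla u^{m-1}+\dots$ and from $\partial_t\eta$ in the prefactor. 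One then uses the arithmetic-geometric inequality $|\nabla z|^2-(1-m)(\nabla\cdot z)^2\ge 0$ (valid since $m>m_1$) for the leading $-4\,\mathcal I_\sigma[u]$ part exactly as before, and estimates the new $\sigma'$-dependent remainder from below. The Barenblatt profile $B_\sigma$ is stationary for the frozen equation, so subtracting it (i.e. working with $z-\bar z$ where $\bar z=\eta\,\nabla B_\sigma^{m-1}-2x=0$, using $B_\sigma^{m-1}=\eta^{-1}\cdot(\text{quadratic})$) keeps all integrands in the relative form $u\,|\nabla u^{m-1}-\nabla B_\sigma^{m-1}|^2$.

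To identify the coefficient $\frac 12(m-m_c)(m-m_1)d^2$ precisely, the key algebraic point is that the extra term is $\big(\text{const}\big)\cdot\frac{\eta'(t)}{\eta(t)}\cdot\big(\text{a positive quadratic form in }\nabla u^{m-1}-\nabla B_\sigma^{m-1}\big)$, and since $\sigma$ is \emph{decreasing} (Lemma~\ref{Lem:Asymptotic}), $\eta'(t)/\eta(t)=\frac d2(m-m_c)\,\sigma'/\sigma$ has a definite sign; one then bounds the quadratic form below by a multiple of $\ird{u\,|\nabla u^{m-1}-\nabla B_\sigma^{m-1}|^2}$, hence by a multiple of $\mathcal I_\sigma[u]$, tracking the exact constants through the relation $|\sigma'(t)|=-\sigma'(t)$. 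The factor $(m-m_1)$ should come from the slack in the arithmetic-geometric inequality combined with the drift part, and $(m-m_c)$ from the $\eta'/\eta$ scaling. I expect the main obstacle to be bookkeeping: correctly collecting every term generated by $\partial_t\eta$ across all the integrations by parts of Section~\ref{Sec:Bakry-Emery} (several of which must be redone with the Barenblatt profile subtracted), and then showing the surviving correction has the claimed sign and magnitude rather than merely being $O(|\sigma'|/\sigma)$. A secondary technical point, to be handled as in the references cited after Proposition~\ref{Thm:BE}, is justifying all integrations by parts and the finiteness of the boundary terms; I would either invoke \cite{MR1777035,MR1853037} or carry out the computation on the regularized flow and pass to the limit.
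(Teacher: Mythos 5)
Your overall strategy for \eqref{Eqn:prod0} is the paper's: split $\frac d{dt}\,\mathcal I_{\sigma(t)}[u(\cdot,t)]$ into the frozen-$\sigma$ part, which Proposition~\ref{Thm:BE} bounds by $-4\,\mathcal I_{\sigma(t)}[u]$, plus the contribution of $\sigma'(t)$ (equivalently, of $\eta'(t)$ with $\eta=\sigma^{\frac d2(m-m_c)}$). The treatment of \eqref{Eqn:entr0} and \eqref{ODE:sigma} as recalled facts is also what the paper does. But your description of the $\sigma'$-dependent correction contains a genuine gap. That correction is \emph{not} ``a positive quadratic form in $\nabla u^{m-1}-\nabla B_\sigma^{m-1}$''. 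If you carry out the computation you sketch, the term generated by $\partial_t\eta$ involves, besides a multiple of $\ird{u\,|z|^2}$, the cross term $\ird{u\,z\cdot x}$, which after integration by parts produces $\ird{u^m}$ and $\ird{|x|^2\,u}$ — i.e.\ zeroth-order quantities, not gradient quantities. It is only because $\sigma$ is chosen by the moment condition $\ird{|x|^2\,u}=K_M\,\sigma$ (via the identity \eqref{IPP}) that these combine into the relative entropy, yielding
\[
\frac d{d\lambda}\,\mathcal I_\lambda[u]=\frac d{2\,\lambda}\,(m-m_c)\Big(\mathcal I_\lambda[u]-4\,d\,(1-m)\,\mathcal F_\lambda[u]\Big)\quad\mbox{at }\lambda=\sigma(t)\,.
\]
The combination $\mathcal I-4\,d\,(1-m)\,\mathcal F$ carries the entropy with an unfavourable sign, so its non-negativity — and hence the claimed sign and magnitude of the correction — is not evident and cannot be obtained by bounding a quadratic form from below. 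One must invoke the entropy -- entropy production inequality \eqref{Ineq:E-EP}, $\mathcal I_\sigma[u]\ge4\,\mathcal F_\sigma[u]$, and the algebraic identity $1-d\,(1-m)=d\,(m-m_1)$ to write
\[
\mathcal I-4\,d\,(1-m)\,\mathcal F=d\,(1-m)\,(\mathcal I-4\,\mathcal F)+d\,(m-m_1)\,\mathcal I\ge d\,(m-m_1)\,\mathcal I\;.
\]
This is where the factor $(m-m_1)$ comes from — not from ``slack in the arithmetic-geometric inequality combined with the drift part''. Your attribution of $(m-m_c)$ to the $\eta'/\eta$ scaling is correct, and the sign of $\sigma'$ enters exactly as you say; but without the explicit computation of $\partial_\lambda\mathcal I_\lambda$ at the matched value of $\sigma$ and the use of \eqref{Ineq:E-EP}, the argument does not close.
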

\begin{proof} Eq.~\eqref{ODE:sigma} and~\eqref{Eqn:entr0} have already been stated respectively in Lemma~\ref{Lem:Asymptotic} and in~\eqref{Eqn:Standard}. They are recalled here only for the convenience of the reader. It remains to prove~\eqref{Eqn:prod0}.

For any given $\sigma=\sigma(t)$, Proposition~\ref{Thm:BE} gives
\begin{multline*}
\frac d{dt}\,\mathcal I_{\sigma(t)}[u(\cdot,t)]= \(\frac d{dt}\,\mathcal I_{\lambda}[u(\cdot,t)]\)_{|\lambda =\sigma(t)}+\sigma'(t)\(\frac d{d\lambda}\,\mathcal I_\lambda[u]\)_{|\lambda=\sigma(t)}\\
\le -\,4\,\mathcal I_{\sigma(t)}[u(\cdot,t)]+\sigma'(t)\(\frac d{d\lambda}\,\mathcal I_\lambda[u]\)_{|\lambda=\sigma(t)}\,.
\end{multline*}
Owing to the definition of $\mathcal I_\lambda$, we obtain
\begin{multline*}
\frac d{d\lambda}\,\mathcal I_\lambda[u]=\frac d2\,(m-m_c)\,\frac 1\lambda\,\mathcal I_\lambda[u]\\
-\frac m{1-m}\,\lambda^{\frac d2(m-m_c)}\ird{\!2\,u\,\(\nabla u^{m-1}-\nabla B_\lambda^{m-1}\)\cdot \frac d{d\lambda} \(\nabla B_\lambda^{m-1}\) }\;.
\end{multline*}
By definition \eqref{Eqn:Barenblatt}, $\nabla B_\lambda^{m-1}(x)=2\,x\, \lambda^{-\frac d2(m-m_c)}$, which implies
\[
\lambda^{\frac d2(m-m_c)}\,\frac{d}{d \lambda} \(\nabla B_\lambda^{m-1}\)=-\frac d\lambda\,(m-m_c)\,x\;.
\]
Substituting this expression into the above computation and integrating by parts, we conclude with the equality
\begin{multline*}
\frac d{d\lambda}\,\mathcal I_\lambda[u]= \frac d2\,(m-m_c)\,\frac 1\lambda\,\mathcal I_\lambda[u]\\
-\frac {2\,d}\lambda\,(m-m_c) \left[\frac{2\,m\,\lambda^{-\frac d2(m-m_c)}}{1-m}\ird{|x|^2\,u}-d\ird{ u^m }\right]\,.
\end{multline*}
A simple computation shows that
\be{IPP}
d\ird{B_1^m}=-\ird{x\cdot\nabla B_1^m}=\frac{2\,m}{1-m}\ird{|x|^2\,B_1}=\frac{2\,m}{1-m}\,K_M
\ee
and, as a consequence, if $\lambda=\sigma=\frac 1{K_M}\,\ird{|x|^2\,u}$, then
\[
\frac{2\,m\,\lambda^{-\frac d2(m-m_c)}}{1-m}\ird{|x|^2\,u}=d\ird{B_\lambda^m}\;,
\]
and finally
\be{Eqn:Ineq:FisherImprovedLambda}
\frac d{d\lambda}\,\mathcal I_\lambda[u]= \frac d{2\,\lambda}\,(m-m_c)\Big(\mathcal I_\lambda[u]-\,4\,d\,(1-m)\,\mathcal F_\lambda[u]\Big)\,.
\ee
Altogether, we have found that
\begin{multline*}\label{Ineq:FisherImproved}
\frac d{dt}\,\Big(\mathcal I_{\sigma(t)}[u(\cdot,t)]\Big)+\,4\,\mathcal I_{\sigma(t)}[u(\cdot,t)]\\
\le\frac d2\,(m-m_c)\,\frac{\sigma'(t)}{\sigma(t)}\Big(\mathcal I_{\sigma(t)}[u]-\,4\,d\,(1-m)\,\mathcal F_{\sigma(t)}[u]\Big)\,.
\end{multline*}
The last term of the right hand side is non-positive because by \eqref{ODE:sigma} we know that $\sigma'(t) \le 0$ and
\begin{multline*}
\mathcal I_{\sigma(t)}[u]-\,4\,d\,(1-m)\,\mathcal F_{\sigma(t)}[u]\\
=d\,(1-m)\Big(\mathcal I_{\sigma(t)}[u]-\,4\,\mathcal F_{\sigma(t)}[u]\Big)+\,d\,(m-m_1)\,\mathcal I_{\sigma(t)}[u]\\
\ge\,d\,(m-m_1)\,\mathcal I_{\sigma(t)}[u]\ge0\;.
\end{multline*}
This implies \eqref{Eqn:prod0}.\end{proof}

To avoid carrying heavy notations, let us write
\[
f(t):=\mathcal F_{\sigma(t)}[u(\cdot,t)]\quad\mbox{and}\quad j(t):=\mathcal J_{\sigma(t)}[u(\cdot,t)]
\]
and denote $f(0)$, $j(0)$ and $\sigma(0)$ respectively by $f_0$, $j_0$ and $\sigma_0$. Estimates~\eqref{ODE:sigma}, \eqref{Eqn:entr0} and \eqref{Eqn:prod0} can be rewritten as
\be{System}
\left\{\begin{array}{l}
f'=-j\le 0\\[4pt]
\sigma'=-\kappa_1\,\sigma^{\frac d2(m-m_c)}\,f\le0\\[4pt]
j'+\,4\,j\le\kappa_2\,j\,\frac{\sigma'}\sigma
\end{array}\right.
\ee
where the constants $\kappa_i$, $i=1$, $2$, are given by
\[
\kappa_1:=2\,d\,\frac{(1-m)^2}{m\,K_M}\quad\mbox{and}\quad\kappa_2:=\frac 12\,(m-m_c)\,(m-m_1)\,d^2\,.
\]
Using the fact that $\lim_{t\to\infty}f(t)=\lim_{t\to\infty}f(t)=0$, as in the proof of Proposition~\ref{Thm:BE}, we find that $j(t)-\,4\,f(t)\ge 0$ and $f(t)\le f_0\,e^{-\,4\,t}$ for any $t\ge 0$.
\begin{remark}\label{Rem:SigmaInfty} 
The decay of $\sigma$ can be estimated by
\[
-\frac d{dt}\(\sigma^{\frac d2\,(1-m)}\)=\frac d2\,(1-m)\,\kappa_1\,f\le\frac d2\,(1-m)\,\kappa_1\,f_0\,e^{-\,4\,t}\,,
\]
thus showing that $\sigma_\infty^{\frac d2\,(1-m)}\ge\sigma_0^{\frac d2\,(1-m)}-\frac{d^2\,(1-m)^3}{4\,m\,K_M}\,f_0$. Since $u_0$ and $B_{\sigma_0}$ have the same mass and second moment, we know that 
\[
f_0=\frac 1{1-m}\ird{\(B_{\sigma_0}^m-u_0^m\)}=\frac{2\,m\,K_M}{d\,(1-m)^2}\,\sigma_0^{\frac d2(1-m)}-\frac 1{(1-m)}\, \ird{u_0^m}\;.
\]
Hence we end up with the positive lower bound
\[\label{SpecificLowerBound}
\sigma_\infty^{\frac d2(1-m) }\ge\frac d2\,(m-m_c)\,\sigma_0^{\frac d2(1-m)}+\frac{d^2\,(1-m)^2}{4\,m\,K_M}\ird{u_0^m}\,.
\]
\end{remark}
{}From~\eqref{System} we get the estimates $\sigma(t)\le\sigma_0$ for any $t\ge0$ and
\[
j'-\,4\,f'=j'+\,4\,j\le\kappa_2\,j\,\frac{\sigma'}\sigma=\kappa_1\,\kappa_2\,\sigma^{-\frac d2\,(1-m)}\,f\,f'\le\kappa_1\,\kappa_2\,\sigma_0^{-\frac d2\,(1-m)}\,f\,f'
\]
Integrating from $0$ to $\infty$ with respect to~$t$ and taking into account the fact that $\lim_{t\to\infty}f(t)=\lim_{t\to\infty}f(t)=0$, we get
\[
-\,j_0+\,4\,f_0\le-\,\frac 12\,\kappa_1\,\kappa_2\,\sigma_0^{-\frac d2\,(1-m)}\,f_0^2\;.
\]
By rewriting this estimate in terms of $\mathcal F_{\sigma_0}[u_0]=f_0$, $\mathcal I_{\sigma_0}[u_0]=j_0$ and after omitting the index $0$, we have achieved our key estimate, which can be written using
\[\label{Cmd}
C_{m,d}:=\frac{d^3}{2\,m\,K_M}\,(m-m_c)\,(m-m_1)\,(1-m)^2
\]
as follows.
\begin{theorem}\label{Thm:IN} Let $d\ge 1$, $m\in(m_1,1)$ and assume that $u$ is a non-negative function in $\L^1(\R^d)$ such that $u^m$ and $x\mapsto |x|^2\,u$ are both integrable on $\R^d$. Let $\sigma=\frac 1{K_M}\,\ird{|x|^2\,u(x)}$ where $M=\ird{u(x)}$. Then the following inequality holds
\be{Eqn:improved}
4\,\mathcal F_\sigma[u]+C_{m,d}\,\frac{\(\mathcal F_\sigma[u]\)^2}{\sigma^{\frac d2\,(1-m)}}\le\mathcal I_\sigma[u]\;.
\ee
\end{theorem}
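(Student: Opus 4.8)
The plan is to integrate the differential system \eqref{System} along the flow of \eqref{Eqn2bis}, exactly as in the proof of \eqref{Ineq:E-EP} recalled in Remark~\ref{Rem:BEmethod}, but now keeping the extra term coming from \eqref{Eqn:prod0}. First I would invoke Lemma~\ref{Lem:ChoiceOfSigma} to identify the prescribed value $\sigma=\frac1{K_M}\ird{|x|^2\,u}$ as the optimal choice $\lambda^*$, so that the initial datum $u_0:=u$ of \eqref{Eqn2bis} is the one for which $\sigma(0)=\sigma$ and the orthogonality condition $\big(\frac{d}{d\sigma}\mathcal F_\sigma[u]\big)=0$ holds at $t=0$; then Lemma~\ref{Lem:BE-rescaled} applies and gives the three relations \eqref{System}. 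The convergence facts $\lim_{t\to\infty}f(t)=\lim_{t\to\infty}j(t)=0$, needed for the integration, follow as in Remark~\ref{Rem:BEmethod}: from $j'+4j\le\kappa_2\,j\,\sigma'/\sigma\le0$ (since $\sigma'\le0$) and a Gronwall argument one gets $j(t)\le j_0\,e^{-4t}\to0$, hence $f(t)\to0$ because $f'=-j$ and $f\ge0$; alternatively $f(t)\le f_0\,e^{-4t}$ directly since $f'=-j\le-4f$ by \eqref{Ineq:E-EP} in the form $\mathcal F_\sigma\le\frac14\mathcal I_\sigma$.

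Next I would carry out the key estimate. Using the second and third lines of \eqref{System} together with the monotonicity $\sigma(t)\le\sigma_0$ (immediate from $\sigma'\le0$), I write
\[
j'-4\,f'=j'+4\,j\le\kappa_2\,j\,\frac{\sigma'}\sigma=\kappa_1\,\kappa_2\,\sigma^{-\frac d2(1-m)}\,f\,f'\le\kappa_1\,\kappa_2\,\sigma_0^{-\frac d2(1-m)}\,f\,f'\,,
\]
where the last inequality uses $f\,f'\le0$ (because $f\ge0$ and $f'\le0$) so that multiplying the nonnegative factor $\sigma^{-\frac d2(1-m)}$ by the smaller constant $\sigma_0^{-\frac d2(1-m)}$ keeps the inequality. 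Integrating from $0$ to $+\infty$ and using the vanishing limits, the left side gives $-j_0+4f_0$ and the right side gives $\frac12\kappa_1\kappa_2\,\sigma_0^{-\frac d2(1-m)}\,(f^2)\big|_0^\infty=-\frac12\kappa_1\kappa_2\,\sigma_0^{-\frac d2(1-m)}\,f_0^2$. Rearranging yields $4f_0+\frac12\kappa_1\kappa_2\,\sigma_0^{-\frac d2(1-m)}\,f_0^2\le j_0$, and since $\tfrac12\kappa_1\kappa_2=\frac{d^3}{2\,m\,K_M}(m-m_c)(m-m_1)(1-m)^2=C_{m,d}$, this is exactly \eqref{Eqn:improved} after dropping the subscript $0$ (legitimate because $u$ was an arbitrary admissible datum).

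The main subtlety — and the step where care is required — is not the algebra but the justification that the flow \eqref{Eqn2bis} with the nonlocal coefficient \eqref{Eqn:sigma} has a solution, starting from the given $u$, for which all the quantities $f(t)$, $j(t)$, $\sigma(t)$ are finite, differentiable, and satisfy \eqref{System} with the stated boundary behaviour at $t=\infty$; this is precisely the content of Lemma~\ref{Lem:BE-rescaled} and of the well-posedness and regularity results imported from \cite{1004,MR1853037,BBDGV,BDGV}, together with the requirement $m\in(m_1,1)$ (equivalently $p\in(1,d/(d-2))$) needed for the arithmetic-geometric step in Proposition~\ref{Thm:BE}. Once those ingredients are granted, the argument above is a short integration. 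I would also remark that the term $d\,(m-m_1)\,\mathcal I_{\sigma(t)}[u]\ge0$ that was discarded in the proof of \eqref{Eqn:prod0} is exactly what one would need to recover if one wanted a sharper constant; keeping only the clean bound \eqref{Eqn:prod0} is what produces the explicit, though presumably non-optimal, $C_{m,d}$.
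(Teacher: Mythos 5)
Your proposal is correct and follows essentially the same route as the paper: set up the nonlocal flow \eqref{Eqn2bis}--\eqref{Eqn:sigma} starting from the given $u$, use Lemma~\ref{Lem:BE-rescaled} to get the system \eqref{System}, bound $\kappa_2\,j\,\sigma'/\sigma=\kappa_1\kappa_2\,\sigma^{-\frac d2(1-m)}f\,f'\le\kappa_1\kappa_2\,\sigma_0^{-\frac d2(1-m)}f\,f'$ via $\sigma\le\sigma_0$ and $f\,f'\le0$, and integrate $j'-4f'$ over $(0,\infty)$ using the vanishing limits of $f$ and $j$. The identification $\tfrac12\kappa_1\kappa_2=C_{m,d}$ and the closing remark on the discarded nonnegative term (cf.\ Remark~\ref{Rem:Better}) are both as in the paper.
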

\noindent Recall that $K_M=K_1\,M^\gamma$, with $\gamma=\frac{(d+2)\,m-d}{d\,(m-m_c)}$. See Appendix~\ref{Sec:Appendix} for details. Notice that this definition of $\gamma$ is compatible with the one of Theorem~\ref{Thm:GN} if $p=1/(2\,m-1)$.

\begin{remark}\label{Rem:Better} If we do not drop any term in the proof of Proposition~\ref{Thm:BE} and Lemma~\ref{Lem:BE-rescaled}, an ODE can be obtained for $j$, based on \eqref{Eqn:BakryEmery} and \eqref{Eqn:Ineq:FisherImprovedLambda} and we can replace \eqref{System} by a system of coupled ODEs that reads
\[
\left\{\begin{array}{l}
f'=-j\le 0\\[4pt]
\sigma'=-\kappa_1\,\sigma^{\frac d2(m-m_c)}\,f\le0\\[4pt]
j'+\,4\,j=d\,(1-m)\,(j-4\,f)\,\frac{\sigma'}\sigma+\kappa_2\,j\,\frac{\sigma'}\sigma\,-\mathsf r
\end{array}\right.
\]
where $\mathsf r:=2\,\sigma^{d\,(m-m_c)}\ird{u^m\(|\nabla z|^2-(1-m)\,\(\nabla \cdot z\)^2\)}\ge 0$ and $z:=\sigma^{\frac d2(m-m_c)}\,\nabla u^{m-1}-2\,x$.

It is then clear that the estimates $\sigma\le\sigma_0$ and $j'+\,4\,j\le\kappa_2\,j\,\frac{\sigma'}\sigma$, which have been used for the proof of Theorem~\ref{Thm:IN}, are not optimal. \end{remark}

\section{Proofs of Theorem~\ref{Thm:GN} and Corollary~\ref{Cor:GN}}\label{Sec:Proofs}

Let us start by rephrasing Theorem~\ref{Thm:IN} in terms of $f=u^{m-1/2}$. Assume that
\[
M=\ird u=\ird{|f|^{2\,p}}\quad\mbox{and}\quad\sigma=\frac 1{K_M}\ird{|x|^2\,u}=\ird{|x|^2\,|f|^{2\,p}}
\]
where $p=1/(2\,m-1)$ and using the notation $f_{M,0,\sigma}^{(p)}\in\mathfrak M_d^{(p)}$ defined in Section~\ref{Sec:Intro}, consider the functional
\[
\mathsf R^{(p)}[f]:=-\tfrac{2\,p}{p+1} \ird{\left[(|f|^{p+1}-\big(f_{M,0,\sigma}^{(p)}\big)^{p+1}\right]}\;.
\]
In preparation for the proof of Theorem~\ref{Thm:GN}, we can state the following result.
\begin{corollary}\label{Cor:11} Let $d\ge 2$, $p>1$ and assume that $p<d/(d-2)$ if $d\ge3$. For any $f\in\L^{p+1}\cap\mathcal D^{1,2}(\R^d)$ such that Condition~\eqref{Normalization} holds, we have
\[
\ird{|\nabla f|^2}+\ird{|f|^{p+1}}-\K{p,d}\(\ird{|f|^{2\,p}}\)^\gamma\ge\mathsf C_{p,d}\,\frac{\(\mathsf R^{(p)}[f]\)^2}{\(\ird{\!|f|^{2\,p}}\)^\gamma}
\]
where $\gamma$ is given by \eqref{gamma}.\end{corollary}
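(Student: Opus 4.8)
The plan is to translate the ``entropic'' inequality \eqref{Eqn:improved} of Theorem~\ref{Thm:IN} into the ``functional'' language of $f$ by means of the standard substitution $u=|f|^{2p}$, $m=(p+1)/(2p)$ and the change of scale fixed by Condition~\eqref{Normalization}. First I would record the dictionary between the quantities appearing in Theorem~\ref{Thm:IN} and those of the statement: under $u=|f|^{2p}$ one has $u^m=|f|^{p+1}$ and $u\,|\nabla u^{m-1}|^2$ is proportional to $|\nabla f|^2$, so that the relative Fisher information $\mathcal I_\sigma[u]$ becomes, up to an explicit multiplicative constant and the additive terms coming from the cross term $\nabla u^{m-1}\cdot\nabla B_\sigma^{m-1}$ and from $|\nabla B_\sigma^{m-1}|^2$, exactly $\ird{|\nabla f|^2}$ plus lower-order terms; and the relative entropy $\mathcal F_\sigma[u]$ becomes, up to the constant $\tfrac{p-1}{p+1}$, the quantity $\ird{\bigl[(f_{M,0,\sigma}^{(p)})^{1-p}(|f|^{2p}-(f_{M,0,\sigma}^{(p)})^{2p})-\tfrac{2p}{p+1}(|f|^{p+1}-(f_{M,0,\sigma}^{(p)})^{p+1})\bigr]}$. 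The point of Condition~\eqref{Normalization} is, as the authors say, that with $M=\ird{|f|^{2p}}$ and the prescribed ratio of second moment to mass${}^\gamma$, the optimal scale $\sigma$ selected by Lemma~\ref{Lem:ChoiceOfSigma}/\eqref{Sigma*} equals $\sigma_*$, hence $\sigma=1$ after the reduction; so the denominator $\sigma^{\frac d2(1-m)}$ in \eqref{Eqn:improved} disappears (becomes a fixed power of $M_*$ times $\sigma_*^{\cdot}$), and $C_{m,d}/\sigma_*^{\cdot}$ can be absorbed into the definition of $\mathsf C_{p,d}$.

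Second, I would identify the left-hand side: expanding $\mathcal I_\sigma[u]$ and using the Euler--Lagrange equation satisfied by $B_\sigma$ (equivalently, integrating by parts the cross term as in the derivation of \eqref{IPP}), the three pieces of $\mathcal I_\sigma[u]-4\mathcal F_\sigma[u]$ reassemble precisely into a multiple of $\ird{|\nabla f|^2}+\ird{|f|^{p+1}}-\K{p,d}\bigl(\ird{|f|^{2p}}\bigr)^\gamma$; this is exactly the computation ``entropy${}-{}$entropy production ${}={}$ deficit in Gagliardo--Nirenberg'' that goes back to \cite{MR1940370}, and under \eqref{Normalization} the constraint on the second moment is precisely what makes the scaling factor in front of $\bigl(\ird{|f|^{2p}}\bigr)^\gamma$ come out as the optimal constant $\K{p,d}$ rather than a scale-dependent one. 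Meanwhile the right-hand side of \eqref{Eqn:improved}, namely $C_{m,d}\,(\mathcal F_\sigma[u])^2/\sigma^{\frac d2(1-m)}$, becomes $\bigl(\tfrac{p-1}{p+1}\bigr)^2 C_{m,d}\,\sigma_*^{-\frac d2(1-m)}\,(\mathsf R^{(p)}[f])^2$, which, after inserting $M=\ird{|f|^{2p}}$ and homogenising to restore scale invariance, is $\mathsf C_{p,d}\,(\mathsf R^{(p)}[f])^2\big/\bigl(\ird{|f|^{2p}}\bigr)^\gamma$ with $\mathsf C_{p,d}$ the constant tabulated in Appendix~\ref{Sec:Appendix}. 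Putting the two sides together yields the claimed inequality.

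Third, I would address regularity/density: Theorem~\ref{Thm:IN} is stated for $u\in\L^1_+(\R^d)$ with $u^m$ and $|x|^2u$ integrable, whereas here $f\in\L^{p+1}\cap\mathcal D^{1,2}(\R^d)$; one reduces to the former class by a truncation and density argument (smooth, compactly supported approximations, monotone convergence on each term), noting that all five functionals involved are continuous along such approximations, and that $\mathsf R^{(p)}[f]$ and the deficit are scale-covariant so the normalisation \eqref{Normalization} is preserved in the limit. I expect the genuinely delicate point to be precisely this last bookkeeping together with the verification that, after the change of variables, the ``residual'' terms produced by the cross term in $\mathcal I_\sigma[u]$ (the ones that are neither $\ird{|\nabla f|^2}$ nor part of the Barenblatt normalisation) recombine exactly — with the right sign and coefficient — into $\ird{|f|^{p+1}}$ and $\K{p,d}\bigl(\ird{|f|^{2p}}\bigr)^\gamma$; this hinges on using \eqref{Normalization} to pin $\sigma=1$ and on the identity \eqref{IPP}, and is where one must be careful not to lose a scaling factor. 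The rest is the explicit constant-chasing deferred to Appendix~\ref{Sec:Appendix}.
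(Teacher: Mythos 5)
Your proposal is correct and follows essentially the same route as the paper: substitute $u=|f|^{2p}$ into Theorem~\ref{Thm:IN}, expand $\tfrac14\,\mathcal I_\sigma[u]-\mathcal F_\sigma[u]$ into a gradient term, an $\ird{|f|^{p+1}}$ term and a constant multiple of $M^\gamma$, and use Condition~\eqref{Normalization} to fix $\sigma=\sigma_*$ before chasing constants. The one nuance worth making explicit is that the normalization's precise job is to equalize the coefficients $\tfrac{m(1-m)}{(2m-1)^2}\,\sigma^{\frac d2(m-m_c)}$ and $d\,\tfrac{m-m_1}{1-m}$ of the two positive terms so the left-hand side becomes exactly a multiple of the Gagliardo--Nirenberg deficit, and the density discussion in your third step is not needed here since Condition~\eqref{Normalization} already supplies the integrability hypotheses of Theorem~\ref{Thm:IN}.
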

This results is slightly more precise than the one given in Theorem~\ref{Thm:GN}, as we simply measure the distance to a special function in $\mathfrak M_d^{(p)}$, the one with same mass and second moment, centered at $0$. The constant $\mathsf C_{p,d}$ is the same as in Theorem~\ref{Thm:GN}: see Appendix~\ref{Sec:Appendix} for its expression.

\begin{proof} By expanding the square in $\mathcal I_\sigma[u]$ and collecting the terms with the ones of $\mathcal F_\sigma[u]$, we find that
\begin{multline*}
\frac 14\,\mathcal I_\sigma[u]-\mathcal F_\sigma[u]=\tfrac{m\,(1-m)}{(2\,m-1)^2}\,\sigma^{\frac d2(m-m_c)}\ird{|\nabla u^{m-\frac 12}|^2}\\
+d\,\tfrac{m-m_1}{1-m}\ird{u^m}+\tfrac 1{1-m}\(m\,K_M\,\sigma^{\frac d2\,(1-m)}-\ird{B_\sigma^m}\)\,.
\end{multline*}
The last term of the right hand side can be rewritten as
\[
\tfrac 1{1-m}\(m\,K_M\,\sigma^{\frac d2\,(1-m)}-\ird{B_\sigma^m}\)=-\tfrac m{1-m}\,\tfrac{d\,(m-m_c)}{(d+2)\,m-d}\,\sigma^{\frac d2\,(1-m)}\,C_1\,M^\gamma
\]
with $\gamma=\frac{(d+2)\,m-d}{d\,(m-m_c)}$ (as in the previous Section) and $C_1=M_*^{1-\gamma}$ (see Appendix~\ref{Sec:Appendix} for details). Consequently Inequality \eqref{Eqn:improved} can be equivalently rewritten~as
\begin{multline}\label{PreCpd}
\tfrac{m\,(1-m)}{(2\,m-1)^2}\,\sigma^{\frac d2(m-m_c)}\ird{|\nabla u^{m-\frac 12}|^2}+d\,\tfrac{m-m_1}{1-m}\ird{u^m}\\
\ge\tfrac m{1-m}\,\tfrac{d\,(m-m_c)}{(d+2)\,m-d}\,\sigma^{\frac d2\,(1-m)}\,C_1\,M^\gamma\\
+\tfrac{d^3\,(m-m_c)\,(m-m_1)\,(1-m)^2}{8\,m\,K_1}\,\frac{\(\mathcal F_\sigma[u]\)^2}{M^\gamma\,\sigma^{\frac d2\,(1-m)}}\;.
\end{multline}
This inequality is invariant under scaling and homogeneous. As already noticed in \eqref{Sigma*}, Condition~\eqref{Normalization} means $\sigma=\sigma_*$, that is $\tfrac{m\,(1-m)}{(2\,m-1)^2}\,\sigma^{\frac d2(m-m_c)}=d\,\tfrac{m-m_1}{1-m}$. Using the explicit expressions that can be found in Appendix~\ref{Sec:Appendix} and reexpressing all quantities in terms of $p=\frac 1{2\,m-1}$ completes the proof of Corollary~\ref{Cor:11}. See Appendix~\ref{Sec:Appendix} for an expression of $\mathsf C_{p,d}$.
\end{proof}

\begin{proof}[Proof of Theorem~\ref{Thm:GN}] It is itself a simple consequences of Corollary~\ref{Cor:11}.

Let us consider the relative entropy with respect to a general Barenblatt function, not even necessarilynormalized with respect to its mass. For a given function $u\in\L^1_+(\R^d)$ with $u^m\in\L^1(\R^d)$ and $|x|^2\,u\in\L^1(\R^d)$, we can consider on $(0,\infty)\times\R^d\times(0,\infty)$ the function $h$ defined by
\[
h(C,y,\sigma)=\frac
1{m-1}\ird{\big[u^m-B_{C,y,\sigma}^m-m\,B_{C,y,\sigma}^{m-1}\,(u-B_{C,y,\sigma})\big]}
\]
where $B_{C,y,\sigma}$ is a general Barenblatt function
\[
B_{C,y,\sigma}(x):=\sigma^{-\frac d2}\(C+\tfrac 1\sigma\,|x-y|^2\)^\frac1{m-1}\quad\forall\;x\in\R^d\,.
\]
An elementary computation shows that
\begin{eqnarray*}
&&\frac{\partial h}{\partial C}=\frac{m\,\sigma^{\frac d2\,(1-m)}}{1-m}\ird{\(u-B_{C,y,\sigma}\)}\;,\\
&&\nabla_yh=\frac{2\,m\,\sigma^{-\frac d2\,(m-m_c)}}{1-m}\ird{(x-y)\(u-B_{C,y,\sigma}\)}\;,\\
&&\frac{\partial h}{\partial\sigma}=m\,\frac d2\,\sigma^{-\frac d2\,(m-m_c)}\Bigg[\,C\ird{\(u-B_{C,y,\sigma}\)}\\
&&\hspace*{4cm}-\frac{m-m_c}{1-m}\,\frac 1\sigma\ird{|x-y|^2\(u-B_{C,y,\sigma}\)}\Bigg]\;.
\end{eqnarray*}
Optimizing with respect to $C$ fixes $C=C_M$, with $M=\ird u$. Once $C=C_M$ is assumed, optimizing with respect to $\sigma$ amounts to choose it such that $\ird{|x|^2\,B_{C,y,\sigma}}=\ird{|x-y|^2\,u}$ as it has been shown in Lemma~\ref {Lem:ChoiceOfSigma}.

This completes the proof of Theorem~\ref{Thm:GN}, since $\mathcal R^{(p)}[f]\ge\mathsf R^{(p)}[f]$ by definition of $\mathcal R^{(p)}$ (see Section~\ref{Sec:Intro}). Notice that optimizing on $y$ amounts to fix the center of mass of the Barenblatt function to be the same as the one of $u$. This is however required neither in the proof of Corollary~\ref{Cor:11} nor in the one of Theorem~\ref{Thm:GN}.\end{proof}

\begin{proof}[Proof of Corollary~\ref{Cor:GN}] It is a straightforward consequence of Theorem~\ref{Thm:GN} and of the Csisz\'ar-Kullback inequality~\eqref{Ineq:CKp} when $f\in\mathcal D^{1,2}(\R^d)$ is such that $\nrm f{2,2\,p}$ is finite. However, $\nrm f{2,2\,p}$ does not enter in the inequality. Since smooth functions with compact support (for which $\nrm f{2,2\,p}$ is obviously finite) are dense $\mathcal D^{1,2}(\R^d)$, the inequality therefore holds without restriction, by density.\end{proof}

\section{Concluding remarks}\label{Sec:Conclusion}

Let us conclude this paper with a few remarks. First of all, notice that Theorem~\ref{Thm:CK} gives a stronger information than Theorem~\ref{Thm:GN}, as not only the $\L^1(\R^d,dx)$ norm is controlled, but also a stronger norm involving the second moment, properly scaled.

\medskip No condition is imposed on the location of the center of mass, which simply has to satisfy $\(\ird{x\,u}\)^2\le \ird u\ird{|x|^2\,u}=\sigma\,M\,K_M$ according to the Cauchy-Schwarz inequality. Hence in the definition of $\mathcal R[f]$ and $\mathcal R^{(p)}[f]$ (in Theorem~\ref{Thm:GN}) as well as in Corollary~\ref{Cor:GN}, the result holds without optimizing on $y\in\R^d$. In~\cite{BDGV,1004}, improved asymptotic rates were obtained by fixing the center of mass in order to kill the linear mode associated to the translation invariance of the Barenblatt functions. Here this is not required since, as $t\to\infty$, the squared relative entropy is simply of higher order. Our improvement is better when the relative entropy is large, and is clearly not optimal for large values~of~$t$.

\medskip Our approach differs from the one of G.~Bianchi and H.~Egnell in \cite{MR1124290} and the one of A.~Cianchi, N.~Fusco, F.~Maggi and A.~Pratelli, \cite{MR2538501}. It gives fully explicit constants in the subcritical regime. The norms involved in the corrective term are not of the same nature.

\medskip Let us list a series of remarks which help for the understanding of our results.

\smallskip\noindent (i) \emph{Scaling properties of the Barenblatt profiles.} Consider the scaling $\lambda\mapsto u_\lambda$ with $u_\lambda(x):=\lambda^d\,u(\lambda\,x)$ for any $x\in\R^d$. Then we have
\[
\sigma_\lambda:=\frac 1{K_M}\ird{|x|^2\,u_\lambda}=\frac 1{\lambda^2}\,\frac 1{K_M}\ird{|x|^2\,u}=\frac\sigma{\lambda^2}
\]
and may observe that
\[
B_{\sigma_\lambda}(x)=\lambda^d\,B_\sigma(\lambda\,x)\;.
\]
As a consequence, we find that $\mathcal F_\sigma[u_\lambda]=\lambda^{d\,(m-1)}\,\mathcal F_\sigma[u]$. 

\smallskip\noindent (ii) \emph{Homogeneity properties of the Barenblatt profiles.} Similarly notice that for any $m\in(m_1,1)$, we have $C_M=C_1\,M^{-\frac{2\,(1-m)}{d\,(m-m_c)}}$ and $K_M=K_1\,M^{1-\frac{2\,(1-m)}{d\,(m-m_c)}}$. Let $u_\lambda:=\lambda\,u$ and denote by $B_{\sigma_\lambda}$ the corresponding best matching Barenblatt function. Using the fact that \hbox{$\nrm{u_\lambda}1=\lambda\,M$} if \hbox{$\nrm u1=M$} and observing that
\[
K_{\lambda M}=K_M\,\lambda^{1-\frac{2\,(1-m)}{d\,(m-m_c)}}\quad\mbox{and}\quad\ird{|x|^2\,u_\lambda}=\lambda\ird{|x|^2\,u}\;,
\]
we find
\[
\sigma_\lambda=\frac 1{K_{\lambda M}}\ird{|x|^2\,u_\lambda}=\lambda^{\frac{2\,(1-m)}{d\,(m-m_c)}}\,\sigma\;.
\]
Since $C_{\lambda M}=\lambda^{-\frac{2\,(1-m)}{d\,(m-m_c)}}\,C_M$, we find that
\[
B_{\sigma_\lambda}(x)=\(\lambda^{\frac{2\,(1-m)}{d\,(m-m_c)}}\,\sigma\)^{-\frac d2}\,\(\lambda^{-\frac{2\,(1-m)}{d\,(m-m_c)}}\,C_M+\tfrac{|x|^2}{\lambda^{\frac{2\,(1-m)}{d\,(m-m_c)}}\,\sigma}\)^\frac 1{m-1}=\lambda\,B_\sigma(x)\;.
\]
As a consequence, we find that $\mathcal F_\sigma[u_\lambda]=\lambda^m\,\mathcal F_\sigma[u]$.

\smallskip\noindent (iii) \emph{The $m=1$ limit.} As $m\to1$, which also corresponds to $p\to1$, we observe that the constant $\mathsf C_{p,d}$ in Theorem~\ref{Thm:GN} has a finite limit. Hence we get no improvement by dividing the improved Gagliardo-Nirenberg inequality by \hbox{$(p-1)$} and passing to the limit $p\to1_+$, since $\mathcal R^{(p)}[f]=O(p-1)$. By doing so, we simply recover the logarithmic Sobolev inequality as in \cite{MR1940370}.

This is consistent with the fact that, as $m\to 1_-$, we have $C_{m,d}\sim(1-m)^2$, $\sigma=O(K_M^{-1})=O(1-m)$ and, since
\[
B_\sigma(x)\sim B_0(x):=M\(\frac{d\,M}{2\,\pi\,\ird{|x|^2\,u}}\)^\frac d2\,\exp\(-\frac d2\,\frac M{\ird{|x|^2\,u}}\,|x|^2\)\,,
\]
we also get that $\mathcal F_\sigma[u]\sim\ird{u\,\log\(\frac u{B_0}\)}$. Hence, in Theorem~\ref{Thm:IN}, the additional term in \eqref{Eqn:improved} is of the order of $1-m$ and disappears when passing to the limit $m\to 1_-$.

\appendix\section{Computation of the constants}\label{Sec:Appendix}

Let us recall first some useful formulae. The surface of the $d-1$ dimensional unit sphere $\sphere$ is given by $|\sphere|=2\,\pi^{d/2}/\,\Gamma(d/2)$. Using the integral representation of Euler's Beta function (see \cite[6.2.1 p.~258]{MR0167642}), we have
\[
\ird{\(1+|x|^2\)^{-a}}=\pi^\frac d2\,\frac{\Gamma\big(a-\frac d2\big)}{\Gamma(a)}\;.
\]
With this formula in hand, various quantities associated with \emph{Barenblatt functions} can be computed. Applied to the function $B(x):=\(1+|x|^2\)^\frac 1{m-1}$, $x\in\R^d$, we find that
\[\label{Eqn:Mstar}
M_*:=\ird B=\pi^\frac d2\,\frac{\Gamma\big(\frac{d\,(m-m_c)}{2\,(1-m)}\big)}{\Gamma\big(\frac 1{1-m}\big)}\;.
\]
Notice that when $M=M_*$, $B=B_1$ with the notation~\eqref{Eqn:Barenblatt} of Section~\ref{Sec:CK}. As a consequence, for $B_1(x)=\(C_M+|x|^2\)^{\frac 1{m-1}}$, a simple change of variables shows that
\[
M:=\ird{B_1}=\ird{\(C_M+|x|^2\)^\frac 1{m-1}}=M_*\,C_M^{-\frac{d\,(m-m_c)}{2\,(1-m)}}\;,
\]
which determines the value of $C_M$, namely
\[
C_M=\(\frac{M_*}M\)^\frac{2\,(1-m)}{d\,(m-m_c)}\,.
\]
A useful equivalent formula is $C_M=C_1\,M^{-\frac{2\,(1-m)}{d\,(m-m_c)}}$ where $C_1=M_*^\frac{2\,(1-m)}{d\,(m-m_c)}$.

\medskip By recalling \eqref{IPP} and observing that
\[
\ird{B_1^m}=\ird{B_1^{m-1}\,B_1}=\ird{(C_M+|x|^2)\,B_1}=M\,C_M+K_M
\]
where $K_M:=\ird{|x|^2\,B_1}$, using $M\,C_M=C_1\,M^\gamma$ with $\gamma=\frac{(d+2)\,m-d}{d\,(m-m_c)}$, we find that
\be{Enq:KMCM}
K_M=\frac{d\,(1-m)}{(d+2)\,m-d}\,C_1\,M^\gamma\quad\mbox{and}\quad\ird{B_1^m}=\frac{2\,m}{(d+2)\,m-d}\,C_1\,M^\gamma\,.
\ee

\medskip Consider the sub-family of \emph{Gagliardo-Nirenberg-Sobolev inequalities}~\eqref{Ineq:GN}. It has been established in \cite[Theorem 1]{MR1940370} that optimal functions are all given by \eqref{Eqn:Optimal}, up to multiplications by a constant, translations and scalings. This allows to compute $\mathcal C_{p,d}^{\rm GN}$. All computations done, we find
\[
\mathcal C_{p,d}^{\rm GN}=\textstyle\(\tfrac{(p-1)^{p+1}}{(p+1)^{d+1-p(d-1)}}\)^\eta\,\(\tfrac{d+2-p\,(d-2)}{2\,(p-1)}\)^\frac 1{2\,p}\,\(\frac {\Gamma\(\frac{p+1}{p-1}\)}{(2\,\pi\,d)^\frac d2\,\Gamma\(\frac{p+1}{p-1}-\frac{d}{2}\)}\)^{(p-1)\,\eta}
\]
with $1/\eta=p\,(d+2-p\,(d-2))$.

It is easy to relate $\mathcal C_{p,d}^{\rm GN}$ and $\K{p,d}$. As in~\cite{MR1940370}, apply \eqref{Ineq:GN-NonHom0} to $f_\lambda$ such that $f_\lambda(x)=\lambda^\frac d{2\,p}\,f(\lambda\,x)$ for any $x\in\R^d$. With $a:=\ird{|\nabla f|^2}$, $b:=\ird{|f|^{p+1}}$, $\alpha:=\frac dp+2-d$ and $\beta:=d\,\frac{p-1}{2\,p}$, Inequality~\eqref{Ineq:GN-NonHom0} amounts to
\[
a\,\lambda^\alpha+b\,\lambda^{-\beta}\ge\K{p,d}\(\ird{|f|^{2\,p}}\)^\gamma\,.
\]
Optimizing the left hand side with respect to $\lambda>0$ shows that
\[
\K{p,d}\,\(\mathcal C_{p,d}^{\rm GN}\)^{2\,p\,\gamma}=\frac{\alpha+\beta}{\alpha^\frac\alpha{\alpha+\beta}+\beta^\frac\beta{\alpha+\beta}}\;.
\]

Let us consider \eqref{PreCpd}. With $p=\frac 1{2\,m-1}$, that is, $m=\frac{p+1}{2\,p}$, and $\mathcal F[u]=\frac m{1-m}\,\mathcal R^{(p)}[f]$ with $u=f^{2\,p}$, it is straightforward to check that
\[
\K{p,d}=\(\tfrac{2\,m-1}{1-m}\)^2\,\tfrac{d\,(m-m_c)}{(d+2)\,m-d}\,\frac{M_*^{1-\gamma}}{\sigma_*^{d\,(m-m_1)}}=\tfrac4{(p-1)^2}\,\tfrac{d-p\,(d-4)}{d+2-p\,(d-2)}\,M_*^{1-\gamma}\,\sigma_*^{d\,\frac{p-1}p-1}
\]
since $u=B_\sigma$ always provides the equality case. Hence, using Identity \eqref{Sigma*}, Inequality~\eqref{PreCpd} amounts to
\begin{multline*}
\ird{|\nabla f|^2}+\ird{|f|^{p+1}}-\K{p,d}\(\ird{|f|^{2\,p}}\)^\gamma\\
\ge\tfrac{(2\,m-1)^2}{m\,(1-m)}\,\sigma_*^{-\frac d2\,(m-m_c)}\,\tfrac{d^3\,(m-m_c)\,(m-m_1)\,(1-m)^2}{8\,m\,K_1}\,\frac{\(\frac m{1-m}\,\mathcal R^{(p)}[f]\)^2}{M^\gamma\,\sigma_*^{\frac d2\,(1-m)}}\;.
\end{multline*}
Using $K_1=\frac{d\,(1-m)}{(d+2)\,m-d}\,M_*^{1-\gamma}$ and expressing everything in terms of $p$, we finally get
\begin{multline*}
\mathsf C_{p,d}=\tfrac{(2\,m-1)^2}{8\,(1-m)^2}\,((d+2)\,m-d)\,d^2\,(m-m_c)\,(m-m_1)\,\frac{M_*^{\gamma-1}}{\sigma_*}\\
=\tfrac{(d-p\,(d-4))\,(d-p\,(d-2))\,(d+2-p\,(d-2))}{16\,p^3\,(p-1)^2}\,\frac{M_*^{\gamma-1}}{\sigma_*(p)}\;.
\end{multline*}

\medskip\begin{spacing}{0.8}\noindent{\it Acknowledgments.\/} {\small The authors acknowledge support both by the ANR projects CBDif-Fr and EVOL (JD) and by MIUR project ``Optimal mass transportation, geometrical and functional inequalities with applications'' (GT). The warm hospitality of the Department of Mathematics of the University of Pavia, where this work have been partially done, is kindly acknowledged. JD thanks Michael Loss for enlighting discussions on the entropy - entropy production method. Both authors thank D.~Matthes for his careful reading of an earlier version of the manuscript, which has resulted in a significant improvement of our results.}
\par\medskip\noindent{\scriptsize\copyright\,2012 by the authors. This paper may be reproduced, in its entirety, for non-commercial purposes.}\end{spacing}


\def\cprime{$'$}

\end{document}